\numberwithin{equation}{section}
\newcommand{\ds}{\displaystyle}
\newcommand{\nni}{\noindent}
\newcommand{\mR}{{\mathbb{R}}}
\newcommand{\mC}{{\mathbb{C}}}
\newcommand{\mD}{{\mathbb{D}}}
\newcommand{\mN}{{\mathbb{N}}}
\newtheorem{theorem}{Theorem}[section]
\newtheorem{corollary}[theorem]{Corollary}
\newtheorem{lemma}[theorem]{Lemma}
\theoremstyle{definition}
\theoremstyle{remark}
\numberwithin{equation}{section}
\begin{document}
	
	\title[On the Convexity of the Bernardi Integral Operator]{On the Convexity of the Bernardi Integral Operator}
	
	\author[Johnny E. Brown]{Johnny E. Brown}
	\address{Department of Mathematics,
Purdue University, West Lafayette, Indiana 47907 USA}
	\email{brown00@purdue.edu}

	\subjclass[2010]{30C45, 30C80}
	
	\keywords{Starlike, convex, integral operator, univalent functions}
    
\begin{abstract}
We prove that the Bernardi Integral Operator maps  certain classes of bounded starlike functions into the class of convex functions, improving the result of Oros and Oros. We also present a general unified method for investigating various other integral operators that preserve many of the previously studied subclasses of univalent and p-valent functions.

\end{abstract}
\maketitle

\section{Introduction and Main Results} 

Let ${\mathcal{H}}(\mD)$ denote the class of functions analytic in the unit disk $\mD$ and let ${\mathcal{H}}_0(\mD)$ be those functions $f\in {\mathcal{H}}(\mD)$ normalized by $f(0)=0$ and $f'(0)=1$. Let $S$ be those functions $f\in {\mathcal{H}}_0(\mD)$ which are univalent. The well-known subclasses of $S$ consisting of univalent starlike and convex functions, denoted by $S^*$ and $K$, satisfy 
$$ \Re e\left\{\frac{zf'(z)}{f(z)}\right\}>0 \quad {\rm{and}}\quad \Re e\left\{1+\frac{zf''(z)}{f'(z)}\right\}>0, $$
for all $z\in\mD$, respectively. It is known that $K\subset S^*\subset S$. J. W. Alexander [1] was the first to study integral operators on $S$. There are many integral operators $T: {\mathcal{H}}_0(\mD)\longrightarrow {\mathcal{H}}(\mD)$ which preserve $S^*$ and $K$. For example the

$${\rm{Alexander\, Transform}}:\quad {\mathcal{A}}(f)(z)=\int^z_0\frac{f(\zeta)}{\zeta}\, d\zeta\quad $$
and
$${\rm{Libera\, Transform}}:\quad {\mathcal{L}}(f)(z)=\frac{2}{z}\int^z_0f(\zeta)\, d\zeta\quad $$
satisfy ${\mathcal{A}}(S^*)\subset S^*, {\mathcal{L}}(S^*)\subset S^*, {\mathcal{A}}(K)\subset K$, and  ${\mathcal{L}}(K)\subset K$. It is also known that  there are many other integral operators which preserve $S^*$ [14]. Typically, the proofs use the method of differential subordination developed by Mocanu and Miller [15]. Alexander proved that ${\mathcal{A}}(S^*)\subset K$ and it is known that for the Libera Transform, ${\mathcal{L}}(S^*)\not\subset K$. Hence it is of interest to ask which subclasses of $S^*$ are mapped into the smaller class $K$ under the Libera Transform. Such a result was recently obtained by Oros and Oros [16] as follows.   For $0<M<1$, if $S_M$ is the well-known subclass of $S^*$ defined by 
\[ S_M=\left\{ f\in H_0(\mD):\, \left|\frac{zf'(z)}{f(z)}-1\right|<M,\, z\in\mD\right\}, \tag{1.1} \label{eq:special}    \]
then for the Libera Transform they proved that ${\mathcal{L}}(S_M)\subset K$,  provided $0<M<M^*$, where $M^*\approx 0.4128\cdots $ is the smallest positive root of 
$$7M^8+14M^7+48M^6+30M^5+67M^4+18M^3+2M-8=0.$$

\nni It is an open question to find the largest possible $M_{\mathcal{L}}$ such that ${\mathcal{L}}(S_{M_{\mathcal{L}}})\subset K$. In addition, they produced an example to show that $M_{\mathcal{L}}\le \frac{3}{5}$ and hence  their results show that the best possible $M_{\mathcal{L}}$ must satisfy
\[M^*\le M_{\mathcal{L}}\le \frac{3}{5} \tag{1.2} \label{eq:special}    \]
\nni In this paper we first prove that there is a one-parameter family of integral operators, which includes both the Alexander and Libera transforms, that preserves $S_M$ and will also map certain  $S_M$ into $K$. As a special case of this result we  improve both the upper and lower bounds on the best possible $M_{\mathcal{L}}$ for the Libera Transform.

For $c>-1$, the Bernardi Integral Operator (or Bernardi Transform) of $f\in {\mathcal{H}}_0(\mD)$ is defined by  
\[ {\mathcal{B}}_c(f)(z)=\frac{1+c}{z^c}\int^z_0\,\zeta^{c-1} f(\zeta)\,d\zeta. \tag{1.3} \label{eq:special}    \]

\nni If $F(z)={\mathcal{B}}_c(f)(z)$, we see that $F$ is the unique solution to the linear differential equation with forcing function $f$:
$$zF'(z)+cF(z)=(1+c)f(z),$$
where $F(0)=0$.

\nni Observe that ${\mathcal{B}}_0(f)={\mathcal{A}}(f)$ and ${\mathcal{B}}_1(f)={\mathcal{L}}(f)$. It has been shown that ${\mathcal{B}}_c(S^*)\subset S^*$ (for example [20] and [14]). We prove that  ${\mathcal{B}}_c(S_M)\subset S_M$ for $0<M<1$ (Lemma 2.3), and one of our main results is that ${\mathcal{B}}_c$ maps certain $S_M$ into the smaller class $K$.  More precisely:\medskip

\begin{theorem} Let  $c=0, 1, 2, \cdots$. If $f\in S_M$ and 
$$0<M<M_c\equiv\sqrt{c^2+1}-c,$$
then ${\mathcal{B}}_c(f)$ is convex. Hence ${\mathcal{B}}_c(S_{M_c})\subset K$.
\end{theorem}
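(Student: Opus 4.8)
The plan is to reduce the convexity condition for $F:=\mathcal B_c(f)$ to a pointwise inequality involving the two bounded quantities $q(z):=zf'(z)/f(z)$ and $p(z):=zF'(z)/F(z)$, and then to solve a sharp extremal problem for disks. By hypothesis $f\in S_M$, so $|q-1|<M$ on $\mD$; by Lemma 2.3, $F\in S_M$, so also $|p-1|<M$ on $\mD$. First I would exploit the defining relation $zF'(z)+cF(z)=(1+c)f(z)$ twice: dividing by $F$ gives $p+c=(1+c)f/F$, and differentiating gives $(1+c)f'=(1+c)F'+zF''$, i.e. $1+zF''/F'=(1+c)f'/F'-c$. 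Combining these with $f'/F'=(q/p)(f/F)$ yields the identity
$$1+\frac{zF''(z)}{F'(z)}=q(z)+\frac{c\,\bigl(q(z)-p(z)\bigr)}{p(z)},\qquad z\in\mD .$$
Taking real parts, the theorem reduces to the claim that $\RE q(z)+c\,\RE\dfrac{q(z)}{p(z)}>c$ for all $z\in\mD$.

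For the reduced inequality I would prove the sharp disk estimate: if $a,b$ lie in the closed disk $\{\,|w-1|\le M\,\}$ then $\RE(a/b)\ge(1-M)/(1+M)$. Granting this, and using that $q$ maps $\mD$ into the \emph{open} disk $\{|w-1|<M\}$ (so $\RE q(z)>1-M$ strictly), we get for every $z\in\mD$
$$\RE q(z)+c\,\RE\frac{q(z)}{p(z)}>(1-M)+c\,\frac{1-M}{1+M}=\frac{(1-M)(1+c+M)}{1+M}.$$
An elementary manipulation shows $(1-M)(1+c+M)\ge c(1+M)\iff M^2+2cM-1\le0\iff M\le\sqrt{c^2+1}-c=M_c$. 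Hence, whenever $0<M\le M_c$, the right-hand side is $\ge c$ and the displayed strict inequality gives $\RE\{1+zF''/F'\}>0$, so $\mathcal B_c(f)$ is convex; this covers both $f\in S_M$ with $M<M_c$ and $f\in S_{M_c}$, proving $\mathcal B_c(S_{M_c})\subset K$. (When $c=0$ the identity collapses to $1+zF''/F'=q$ and $\RE q>1-M\ge0$ for all $M<1=M_0$, so this case is immediate.)

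I expect the main obstacle to be the sharp estimate $\RE(a/b)\ge(1-M)/(1+M)$ for $a,b\in\{|w-1|\le M\}$. The route I have in mind: fix $b$; then $a/b$ ranges over the disk $\{\,|w-1/b|\le M/|b|\,\}$, so $\min_a\RE(a/b)=\RE(1/b)-M/|b|=(\cos\psi-M)/|b|$ with $\psi=\arg b$. One then minimizes over $b$ in the closed disk, using that $|\arg b|\le\arcsin M$ forces $\cos\psi\ge\sqrt{1-M^2}>M$ provided $M<1/\sqrt2$ — which holds in all relevant cases, since $M_c\le\sqrt2-1$ for $c\ge1$ and $c=0$ is trivial — followed by a short monotonicity computation showing the outer minimum is attained at $\psi=0$, $|b|=1+M$. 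Care should also be taken to track the directions of strictness and to record that Lemma 2.3 is precisely what furnishes $|p-1|<M$.
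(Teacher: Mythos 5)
Your proposal is correct, but it takes a genuinely different route from the paper's proof. The paper eliminates $f$ entirely: it encodes $f\in S_M$ as the differential inequality $\left|\frac{zp'}{p+c}+p-1\right|<M$ for $p=zF'/F$ alone, rewrites this as $\left|\Lambda+(c-1)-\frac{c}{p}\right|<M\left|1+\frac{c}{p}\right|$ with $\Lambda=1+zF''/F'$, parametrizes $1/p$ in the disk $\left|w-\frac{1}{1-M^2}\right|\le\frac{M}{1-M^2}$ together with a factor $M\delta e^{i\theta}$, and then spends most of the proof minimizing the resulting function $\phi(x,y)$ over the square $[-1,1]^2$ by boundary and critical-point analysis. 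You instead keep both $q=zf'(z)/f(z)$ and $p=zF'(z)/F(z)$, use the identity $1+zF''/F'=q+c\,q/p-c$ (which does follow from $zF'+cF=(1+c)f$ exactly as you derive it), invoke Lemma 2.3 to place both $q$ and $p$ in the disk $|w-1|<M$, and reduce everything to the sharp two-point estimate $\RE(a/b)\ge(1-M)/(1+M)$. I checked that estimate along your suggested route: $\min_a\RE(a/b)=(\cos\psi-M)/|b|$, the minimum over $|b|$ at fixed $\psi$ occurs at $|b|=\cos\psi+\sqrt{M^2-\sin^2\psi}$, and the resulting function of $t=\cos\psi$ is decreasing on $[\sqrt{1-M^2},1]$ precisely when $M<1/2$ (its only critical point is at $\cos\psi=1/(2M)$, outside the range), which covers all $M< M_c\le\sqrt2-1$ for $c\ge1$, with $c=0$ trivial as you note; this is the one step you must write out in full, including the restriction $M<1/2$. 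Your final comparison $(1-M)(1+c+M)\ge c(1+M)\iff M^2+2cM-1\le0\iff M\le M_c$ reproduces exactly the paper's inequality (3.2), so you recover the same constant with no loss from treating $q$ and $p$ as independent points of the disk. Your route buys a shorter and more conceptual argument, a transparent strictness bookkeeping (only $\RE q>1-M$ needs to be strict), and validity at the endpoint $M=M_c$, which is what the conclusion $\mathcal{B}_c(S_{M_c})\subset K$ literally requires; the paper's extremal analysis, on the other hand, is the kind of computation that generalizes to situations where no clean identity in $q$ and $p$ is available.
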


\medskip

\nni Note that when $c=0$ we have ${\mathcal{B}}_0(S^*)={\mathcal{A}}(S^*)\subset K$ and the theorem is trivial in this case. However, the special case for the Libera Transform, $c=1$, is of particular interest which we consider in detail in Section $4$. The proof of the above theorem gives rise to several other related results as follows. \medskip

If $f, g\in {\mathcal{H}}(\mD)$, then $f$ is subordinate to $g$, $f(z)\prec g(z)$, if there exists $\omega (z)$ such that $f(z)=g(\omega(z))$,  where $\omega\in {\mathcal{H}}(\mD)$, $\omega(0)=0$ and $|\omega(z)|<1$. It is known that if $g(z)$ is univalent, then $f(z)\prec g(z)$ if and only if $f(0)=g(0)$ and $f(\mD)\subset g(\mD)$.\medskip

There are some subclasses of starlike functions which are preserved under the Bernardi Transform as well as other linear and nonlinear integral operators (for example [14]). We can prove a very general result which contains some of these previous results, as well as several new ones, and in addition provide a unified method to investigate many subclasses of $S^*$:

\begin{theorem} Let $\Psi (z)$ be any convex univalent function in $\mathcal{H}(\mD)$ with $\Psi(0)=1$ and $ \Re e \,\Psi(z)>0$ for all $z\in\mD$. If  $${\mathcal{F}}=\left\{f\in \mathcal{H}_0(\mD):\, \frac{zf'(z)}{f(z)}\prec \Psi(z)\right\},$$
then ${\mathcal{B}}_c({\mathcal{F}})\subset {\mathcal{F}}$, for $c=0, 1, 2, \cdots$. Hence the class ${\mathcal{F}}$ is preserved by the Bernardi Transform for $c=0, 1, 2, \cdots$. 
\end{theorem}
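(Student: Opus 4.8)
The plan is to combine the differential‑equation characterization of $F=\mathcal{B}_c(f)$ with a first–order differential subordination of ``admissible'' type. First I would record that $\mathcal{F}\subset S^{*}$: if $\tfrac{zf'}{f}\prec\Psi$ and $\Re e\,\Psi>0$ then $\Re e\,\tfrac{zf'(z)}{f(z)}>0$, so $f$ is starlike. By the known inclusion $\mathcal{B}_c(S^{*})\subset S^{*}$, $F=\mathcal{B}_c(f)$ is starlike; hence $F(z)/z$ is zero–free in $\mD$, the function $p(z):=\tfrac{zF'(z)}{F(z)}$ is analytic in $\mD$ with $p(0)=1=\Psi(0)$ and $\Re e\,p(z)>0$, so that $\Re e\bigl(p(z)+c\bigr)>0$ for every $c\ge0$. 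What must be shown is exactly $p(z)\prec\Psi(z)$, i.e. $F\in\mathcal{F}$.

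Next I would turn the hypothesis into a subordination for $p$. Substituting $zF'(z)=p(z)F(z)$ into $zF'(z)+cF(z)=(1+c)f(z)$ gives $F(z)\,(p(z)+c)=(1+c)f(z)$; taking the logarithmic derivative of this identity and multiplying by $z$ yields
\[ \frac{zf'(z)}{f(z)}\;=\;p(z)+\frac{zp'(z)}{p(z)+c}. \]
Since the left side is subordinate to $\Psi$, we obtain $p(z)+\dfrac{zp'(z)}{p(z)+c}\prec\Psi(z)$.

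Now I would invoke the theory of admissible functions of Miller and Mocanu [15]. Putting $\psi(r,s;z):=r+\tfrac{s}{\,r+c\,}$, the previous line says $\psi\bigl(p(z),zp'(z);z\bigr)\in\Psi(\mD)$; to conclude $p\prec\Psi$ it suffices to verify the admissibility condition
\[ \psi\bigl(\Psi(\zeta),\,m\,\zeta\Psi'(\zeta);z\bigr)=\Psi(\zeta)+\frac{m\,\zeta\Psi'(\zeta)}{\Psi(\zeta)+c}\;\notin\;\Psi(\mD)\qquad(|\zeta|=1,\ z\in\mD,\ m\ge1). \]
Suppose this value equals some $w\in\Psi(\mD)$; then $\dfrac{w-\Psi(\zeta)}{\zeta\Psi'(\zeta)}=\dfrac{m}{\Psi(\zeta)+c}$. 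The left side has negative real part, because $\Psi(\mD)$ is a convex domain and $\zeta\Psi'(\zeta)$ is a positive multiple of the outward normal to $\partial\Psi(\mD)$ at the boundary point $\Psi(\zeta)$, so the chord from $\Psi(\zeta)$ to the interior point $w$ makes an obtuse angle with it. The right side has real part $m\,\dfrac{\Re e\,\Psi(\zeta)+c}{\,|\Psi(\zeta)+c|^{2}\,}\ge0$, since $\Re e\,\Psi(\zeta)\ge0$ and $c\ge0$. This contradiction establishes admissibility, whence $p\prec\Psi$ and $\mathcal{B}_c(f)\in\mathcal{F}$. Because a general convex $\Psi$ need not be regular on $\partial\mD$, this last step is run for the dilations $\Psi_\rho(z)=\Psi(\rho z)$, $0<\rho<1$, which are regular on $\overline{\mD}$ and satisfy $\Re e\,\Psi_\rho>0$ there, letting $\rho\to1^{-}$ — this is precisely how the admissible–function theorem is set up.

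The algebraic identity and the passage to the differential subordination are routine; the heart of the argument is the admissibility verification, and the point I would stress is that the convexity of $\Psi(\mD)$ together with $\Re e\,\Psi>0$ and $c\ge0$ are exactly the ingredients that make the normal–direction estimate go through. The main technical nuisances I anticipate are (i) ensuring at the outset that $F=\mathcal{B}_c(f)$ is zero–free off the origin so that $p$ is genuinely analytic (handled by $\mathcal{B}_c(S^{*})\subset S^{*}$), and (ii) the boundary behaviour of $\Psi$, handled by the dilation argument above.
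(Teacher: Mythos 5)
Your argument is correct, but it follows a genuinely different route from the paper. The paper does not use the Miller--Mocanu admissibility machinery here: it writes $\bigl(z^cf(z)\bigr)'/\bigl(\int_0^z\zeta^{c-1}f(\zeta)\,d\zeta\bigr)'=\tfrac{zf'(z)}{f(z)}+c\prec\Psi(z)+c$ and then invokes its Lemma 2.1 (a convex-hull averaging lemma over the starlike sheets of $\int_0^z\zeta^{c-1}f\,d\zeta$, a multivalent generalization of Libera's lemma, with Lemma 2.2 supplying the needed starlikeness and $(1+c)$-valence) to pass from the subordination of the quotient of derivatives to the subordination of the quotient itself, which is exactly $\tfrac{zF'}{F}+c\prec\Psi+c$. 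That route is elementary and self-contained, sidesteps the boundary regularity of $\Psi$ entirely (no dilation argument needed), and is the ``unified method'' the paper is advertising; its price is that $c$ must be a nonnegative integer so that $z^cf(z)$ is single-valued and the valence count works. Your route — deriving $p+\tfrac{zp'}{p+c}\prec\Psi$ from $zF'+cF=(1+c)f$ and verifying admissibility via the outward-normal estimate at $\partial\Psi(\mD)$ against $\Re e\{m/(\Psi(\zeta)+c)\}\ge0$ — is the standard differential-subordination approach the introduction alludes to, and it actually buys something the paper's proof does not: the admissibility inequality only needs $\Re e\,\Psi\ge0$ on $\partial\mD$ and $c\ge0$ real (indeed any real $c$ with $\Re e\,\Psi+c\ge0$), so you get the conclusion for all real $c\ge0$, not just integers. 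Your two anticipated technical points (zero-freeness of $F$ via $\mathcal{B}_c(S^*)\subset S^*$, and the dilation $\Psi_\rho$ to handle boundary behaviour) are exactly the right ones and are handled correctly.
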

\medskip

\nni When $\Psi(z)$ is starlike, the class ${\mathcal{F}}$ is referred to as the Ma-Minda Type (MMT) starlike class [13]. A small sampling of special cases where  the Bernardi Transform preserves ${\mathcal{F}}$ include the following - most of which are apparently new results:
\begin{itemize}
\item[(a)]\, If $\ds\Psi(z)=1+\left(\frac{2(1-\alpha)z}{1-z}\right)$, then $\ds {\mathcal{F}}=S^*(\alpha)=\left\{ f\in{\mathcal{H}}_0:\, \Re e \left(\frac{zf'(z)}{f(z)}\right)>\alpha   \right\}$ is the class of starlike functions of order $\alpha$, where $\, 0\le\alpha<1$. \medskip

\item[(b)] If $\ds\Psi(z)=\left(\frac{1+z}{1-z}\right)^{\!\beta}$, then ${\mathcal{F}}=SS^*(\beta)$ is the class of strongly starlike functions of order $\beta$ where $\ds \left|\arg\left(\frac{zf'(z)}{f(z)}\right)\right|<\frac{\beta\pi}{2}$, where $0<\beta\le 1$.\medskip

\item[(c)] If $\ds\Psi(z)=\left(\frac{1+Az}{1+Bz}\right)$ , then ${\mathcal{F}}$ is the  Janowski generalized class of starlike functions $S^*[A,B]$, where $\ds \frac{zf'(z)}{f(z)}\prec  \left(\frac{1+Az}{1+Bz}\right)$ and $-1\le B<A\le 1$ (This particular case was already proved in [6]).\medskip

\item[(d)] If $\Psi(z)=\cos z$ and $\Psi(z)=1+\sin z$,  then ${\mathcal{F}}$ is the subclass of starlike functions $S^*_c$ and $S^*_s$, respectively,  studied in [5, 7].\medskip

 \item[(e)]  If $\Psi (z)$ is the  fractional linear map  $\ds\Psi(z)=\frac{(\beta^2+\alpha-\alpha^2)z+\beta}{z(1-\alpha)+\beta}$, then ${\mathcal{F}}$ is the class of all bounded functions $f\in{\mathcal{H}}_0(\mD)$ with $\ds\left|\frac{zf'(z)}{f(z)}-\alpha\right|<\beta$, where  $0<\beta\le\alpha\le 1$, which has also been studied by several authors.
 
 \item[(f)]  If $\Psi (z)$ is the  fractional linear map  $\ds\Psi(z)=\frac{1-z}{1+ze^{2i\beta}}$, where $|\beta|<\frac{\pi}{2}$, then ${\mathcal{F}}$ is the class of spirallike functions $f\in{\mathcal{H}}_0(\mD)$ with $\ds \Re e\left\{e^{i\beta}\left(\frac{zf'(z)}{f(z)}\right)\right\} >0$, introduced and studied  by  L. $\rm{{\breve{S}}pa{\breve{c}}ek}$ [21] in 1933.
\end{itemize}
\bigskip

\section{Preliminaries}

\nni Our first result generalizes a result of Libera[11] and was proved  previously by the author in [4]. We include the proof here for completeness.\medskip

\begin{lemma} 
Suppose $g\in {\mathcal{H}}(\mD)$ and maps $\mD$ onto a finitely-sheeted region starlike with respect to the origin. If $\Phi\in {\mathcal{H}}(\mD)$ is a univalent convex mapping, 
$\ds k\in {\mathcal{H}}(\mD), k(0)=g(0)=0, \,\frac{k(0)}{g(0)}=\Phi(0)$ and  $\ds \frac{k'(z)}{g'(z)}\prec\Phi(z)$, then $\ds \frac{k(z)}{g(z)}\prec\Phi(z)$.
\end{lemma}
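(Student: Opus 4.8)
The plan is to reduce the subordination for the quotient $k/g$ to a pointwise inclusion statement that follows from the averaging/convexity properties of $\Phi$. The key observation is that because $g$ maps $\mD$ onto a finitely-sheeted starlike region, we have a useful integral representation: writing $g(z)=g_1(z)$ along the ray from $0$ to $z$, one checks that $g(z)=\int_0^1 \frac{d}{dt}\,g(tz)\,dt = \int_0^1 z\,g'(tz)\,dt$, and similarly $k(z)=\int_0^1 z\,k'(tz)\,dt$, so that
\[
\frac{k(z)}{g(z)}=\frac{\int_0^1 z\,k'(tz)\,dt}{\int_0^1 z\,g'(tz)\,dt}=\frac{\int_0^1 \frac{k'(tz)}{g'(tz)}\,g'(tz)\,dt}{\int_0^1 g'(tz)\,dt}.
\]
Thus $k(z)/g(z)$ is a weighted average of the values $k'(tz)/g'(tz)$, $t\in(0,1)$, with complex weights $g'(tz)\,dt/\!\int_0^1 g'(sz)\,ds$. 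By hypothesis each value $k'(tz)/g'(tz)$ lies in the convex domain $\Omega=\Phi(\mD)$.

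First I would make precise the starlikeness condition: since $g(\mD)$ is a finitely-sheeted starlike region with respect to the origin, the segment from $0$ to $g(z)$ lies in $g(\mD)$ (counted with multiplicity), which is exactly what legitimizes integrating along the pre-image of that radial segment; in fact one shows $zg'(z)/g(z)$ has positive real part on each sheet, equivalently $\RE\{zg'(z)/g(z)\}>0$, which is what the representation above really needs. Next, the crucial step: the measure $d\mu_z(t)=g'(tz)\,dt$ is in general complex, so $k(z)/g(z)$ is \emph{not} literally a convex combination of points of $\Omega$ — this is the main obstacle. The standard way around it is the Herglotz/subordination trick of Libera: rather than averaging the quotient directly, rewrite the relation $\frac{k'(z)}{g'(z)}\prec\Phi(z)$ as $k'(z)=g'(z)\,\Phi(\omega(z))$ for some Schwarz function $\omega$, integrate, and then divide; one shows the resulting quotient still lands in $\Phi(\mD)$ by invoking that $\Phi$ is univalent and convex so that $\Phi(\mD)$ is a convex domain and the relevant generalized average (a ratio of two integrals against $g'(tz)\,dt$) of points of a convex set again lies in that convex set. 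Concretely, set $p(z)=k(z)/g(z)$; then from $zk'+$ (lower order) one derives a first-order linear ODE for $p$ whose solution exhibits $p(z)$ as an honest (positive-weight) average of $\Phi$-values once one uses $\RE\{zg'/g\}>0$ to guarantee the weights are nonnegative after the change of variables. This is exactly the Miller–Mocanu style argument, and the positivity of the weight is where the starlikeness of $g$ is consumed.

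Here is the cleaner route I would actually write down. Let $p = k/g$, so $k = pg$ and $k' = p'g + pg'$. Dividing by $g'$:
\[
\frac{k'(z)}{g'(z)} = p(z) + p'(z)\,\frac{g(z)}{g'(z)} = p(z) + \frac{z p'(z)}{\,zg'(z)/g(z)\,}.
\]
Write $q(z)=zg'(z)/g(z)$; the starlikeness hypothesis gives $\RE q(z)>0$ on $\mD$ (this is the finitely-sheeted starlike condition, and I would spell that implication out). We are given $p(z) + zp'(z)/q(z) \prec \Phi(z)$ with $p(0)=\Phi(0)$, and we want $p(z)\prec\Phi(z)$. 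This is precisely a second-order-type differential subordination of the kind handled by the admissibility method of Miller and Mocanu: since $\Phi$ is convex univalent and $\RE q > 0$, the function $\Phi$ is an "admissible dominant" for the operator $p\mapsto p + zp'/q$. I would cite the relevant lemma (convex $\Phi$, $\RE q>0 \Rightarrow$ the subordination chains through; this is classical, e.g. the Miller–Mocanu theory quoted in the paper as [15]) and then the result is immediate.

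The main obstacle, to restate it plainly, is justifying that one may pass from the derivative subordination to the quotient subordination despite the weights being complex: the resolution is either (i) the integral-representation argument where positivity of $\RE\{zg'/g\}$ is used to control the weights after a change of variable to the convex domain's boundary parametrization, or (ii) the differential-subordination reformulation above, where convexity of $\Phi$ together with $\RE q>0$ is exactly the admissibility condition. I would use route (ii) since it is shortest, but would remark that (i) is the source of the name "Libera's lemma" and explains why only starlikeness of $g$ (not convexity) is needed.
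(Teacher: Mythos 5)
Your proof is essentially correct but takes a genuinely different route from the paper's. The paper resolves the ``complex weights'' obstacle you identify not by a positivity argument but by Libera's change of variables: fixing $z_0$, setting $w_0=g(z_0)$ and $K=k\circ g^{-1}$ on a starlike sheet, one gets
\[
\frac{k(z_0)}{g(z_0)}=\frac{K(w_0)}{w_0}=\int_0^1 K'(tw_0)\,dt=\int_0^1\frac{k'(x(t))}{g'(x(t))}\,dt ,
\]
an average of values of $k'/g'$ against the genuine probability measure $dt$ on the segment from $0$ to $w_0$ (which lies in the sheet precisely because the sheet is starlike); convexity of $\Phi(\mD)$ then places this average in $\Phi(\mD)$, and univalence of $\Phi$ upgrades the inclusion to subordination. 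Your preferred route (ii) --- rewriting the hypothesis as $p(z)+zp'(z)/q(z)\prec\Phi(z)$ with $p=k/g$, $q=zg'/g$, and invoking the Miller--Mocanu theorem for convex dominants ($\Phi$ convex, $\RE\,(1/q)>0$, $p(0)=\Phi(0)$ imply $p\prec\Phi$) --- is valid and shorter, but it outsources the substance to the cited admissibility theorem and requires you to supply two points you only gesture at: that ``finitely-sheeted starlike with respect to the origin'' yields $\RE\{zg'(z)/g(z)\}>0$ on all of $\mD$, and that $g$ is zero-free off the origin with $k$ vanishing to a compatible order there, so that $p$ and $1/q$ are analytic where the theorem needs them. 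What each approach buys: the paper's is elementary and self-contained (no differential-subordination machinery), while yours makes transparent exactly which structural facts ($\Phi$ convex, $\RE\{zg'/g\}>0$) drive the lemma and generalizes at once to other operators of the form $p+P\cdot zp'$. Your sketch of route (i) correctly names the difficulty, but the actual fix in the paper is the substitution $w=g(z)$, which converts the weight $g'(tz)\,dt$ into the uniform real measure $dt$ on a radial segment, not a control of complex weights via a boundary parametrization.
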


\begin{proof} 
Fix $z_0\in\mD$ and let $w_0=g(z_0)$. Choose a branch of $g^{-1}$ defined on one of the starlike sheets $\Omega$ so that $K(w)=k\circ g^{-1}(w)$ is analytic in $\Omega$. Since $\Omega$ is starlike, the ray from $0$ to $w_0$ lies in $\Omega$, so if we let $x(t)$, for $0\le t\le 1$, be the preimage of this ray, since $\Phi(\mD)$ is convex we conclude
$$\begin{array}{rl}
\ds\frac{K(w_0)}{w_0}&=\ds\int^1_0\,K'(tw_0)\,dt=\ds\int^1_0\,\frac{k'(x(t)) }{g'(x(t)) }\, dt\\ & \\

&\ds \subset\, {\rm{convex\,\, hull\,\, of}}\,\, \left\{\frac{k'(z) }{g'(z)}:\, z\in\mD \right\}\subset \Phi(\mD).
\end{array}$$
Hence $\ds\frac{k(z_0)}{g(z_0)}\subset\Phi(\mD)$ and $\ds \frac{k(0)}{g(0)}=\Phi(0)$ so the result follows by univalence of $\Phi$.  
\end{proof}

\begin{lemma} 
If $f\in S^*$, then the function $\ds\int^z_0\, \zeta^{c-1}f(\zeta)\, d\zeta$ is also starlike and $(1+c)-$valent for $c=0, 1, 2, \cdots$.
\end{lemma}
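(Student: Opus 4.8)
The plan is to realize the integral $G(z)=\int_0^z \zeta^{c-1}f(\zeta)\,d\zeta$ as an instance of Lemma 2.1 with a well-chosen comparison function $\Phi$, using that $f\in S^*$ means $zf'(z)/f(z)$ has positive real part. First I would compute $zG'(z)=z^c f(z)$, so that the natural "model" for $G$ is the power function $g(z)=z^{c+1}/(c+1)$, which indeed maps $\mD$ onto a region that is starlike with respect to the origin and covers it $(c+1)$ times when $c$ is a nonnegative integer; note $g'(z)=z^c$, so $G'(z)/g'(z)=f(z)/z\in{\mathcal H}(\mD)$ with $G'(0)/g'(0)=1$. To apply Lemma 2.1 I need a univalent convex $\Phi$ with $\Phi(0)=1$ such that $f(z)/z\prec\Phi(z)$; the classical fact that $f\in S^*$ forces $|\log(f(z)/z)|$ to be controlled — more precisely $f(z)/z$ is subordinate to $1/(1-z)^2$, or one can use the sharper Koebe-type bound — provides such a $\Phi$ (the half-plane map or the square of the Koebe map, both convex after a suitable adjustment). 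This yields $G(z)/g(z)=\big((c+1)/z^{c+1}\big)G(z)\prec\Phi(z)$, which in particular is nonvanishing in $\mD$.

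Next I would verify starlikeness of $G$ directly. Writing $H(z)=(c+1)G(z)/z^{c+1}$, so $G(z)=z^{c+1}H(z)/(c+1)$ with $H$ analytic and nonvanishing, a logarithmic derivative computation gives
$$\frac{zG'(z)}{G(z)} = (c+1) + \frac{zH'(z)}{H(z)}.$$
On the other hand, differentiating $zG'(z)=z^c f(z)$ and dividing by $G'(z)=z^{c-1}f(z)$ gives a second expression, and comparing the two I can solve for $zH'(z)/H(z)$ in terms of $zf'(z)/f(z)$ and the subordination data for $H$. The goal is to show $\RE\{zG'(z)/G(z)\}>0$ on $\mD$; since $\RE\{zf'(z)/f(z)\}>0$ and the extra term contributed by the $z^c$ factor is exactly $c\ge 0$, this should come out cleanly once the algebra is arranged. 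For the $(c+1)$-valence, I would argue that the argument principle applied to $G$ on circles $|z|=r\to 1$ shows the image winds around each of its points $(c+1)$ times: the winding number of $G(re^{i\theta})$ about $0$ equals $\frac{1}{2\pi}\int_0^{2\pi}\RE\{zG'(z)/G(z)\}\,d\theta$ evaluated at $z=re^{i\theta}$, which by the displayed identity equals $(c+1)$ plus the winding of $H$, and $H$ being nonvanishing and close to $1$ near the origin contributes $0$.

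The main obstacle I anticipate is pinning down the precise sense in which $G$ is "starlike and $(c+1)$-valent" — since $G$ is not univalent for $c\ge 1$, one must use the notion of a function whose image is a finitely-sheeted starlike region (in the sense already invoked in Lemma 2.1), and the careful point is to show that the multiplicity is constant and equal to $c+1$ rather than merely bounded. I would handle this by first establishing $\RE\{zG'(z)/G(z)\}>0$ globally (which shows $G$ is "starlike" in the generalized sense: every ray from the origin is covered monotonically in argument), then deducing constant valence $c+1$ from the boundary winding-number computation above, using that $zG'(z)/G(z)\to c+1$ continuously and never vanishes. The case $c=0$ is just Alexander's classical result and serves as a sanity check.
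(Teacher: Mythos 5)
Your overall strategy---reduce to Lemma 2.1 with a suitable pair $(k,g)$ and a convex majorant $\Phi$, then get the valence from the mean value of $\RE\{zG'(z)/G(z)\}$ over circles---is the right shape, and your final winding-number computation agrees with what the paper does. But the specific reduction you chose breaks down at the key step. Taking $g(z)=z^{c+1}/(c+1)$ gives $G'(z)/g'(z)=f(z)/z$, and there is \emph{no} univalent convex $\Phi$ with $f(z)/z\prec\Phi(z)$ valid for all $f\in S^*$: the Koebe function already has $f(z)/z=1/(1-z)^2$, whose range is $\mathbb{C}\setminus(-\infty,-1/4]$ and whose convex hull is all of $\mathbb{C}$, so no proper convex domain (in particular none omitting $0$) can contain it. Neither $1/(1-z)^2$ nor its square is a convex map, and $f(z)/z$ is certainly not subordinate to the half-plane map. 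So Lemma 2.1 cannot be applied with your choice of $g$, and the nonvanishing of $G(z)/z^{c+1}$ does not follow. Likewise, the assertion that $\RE\{zG'(z)/G(z)\}>0$ will "come out cleanly once the algebra is arranged" is not a proof: that inequality \emph{is} the content of the lemma, and it does not follow from $\RE\{zf'(z)/f(z)\}>0$ together with a logarithmic-derivative identity; some averaging/convex-hull argument over the integral is unavoidable.

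The repair is to choose the comparison function differently: take $g(z)=z^cf(z)$, which is starlike in the generalized (finitely-sheeted) sense and $(1+c)$-valent because $zg'(z)/g(z)=zf'(z)/f(z)+c$, and take $k(z)=G(z)$, so that $zk'(z)=g(z)$ and
$k'(z)/g'(z)=\bigl[\,zf'(z)/f(z)+c\,\bigr]^{-1}$.
This quotient lies in the right half-plane and is therefore subordinate to the convex map $\Phi(z)=\frac{1}{1+c}\cdot\frac{1+z}{1-z}$, so Lemma 2.1 applies and yields $\RE\{k(z)/g(z)\}=\RE\{G(z)/(zG'(z))\}>0$, i.e. $G$ is starlike; your circle-average argument then gives the $(1+c)$-valence exactly as you describe.
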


\begin{proof} 
Let $g(z)$ and $k(z)$ be defined by

\[ g(z)=z^cf(z) \tag{2.1} \label{eq:special}    \]

\[ k(z)=\int^z_0\, \zeta^{c-1}f(\zeta)\, d\zeta \tag{2.2} .\label{eq:special}    \]

\noindent Hence $zk'(z)=g(z)$ and  (2.1) also gives
$$\frac{zg'(z)}{g(z)}=\frac{zf'(z)}{f(z)}+c.$$
Since $f$ is starlike and $c\ge 0$, it follows that $g$ is also starlike and since $f$ is univalent, the function $g$ is $(1+c)-$valent.

Next, we see that 
$$\frac{k'(z)}{g'(z)}=\frac{z^{c-1}f(z)}{z^cf'(z)+cz^{c-1}f(z)}=\left[\left(\frac{zf'(z)}{f(z)}\right)+c\right]^{-1}.$$

\noindent Thus $\ds \Re e\left\{\frac{k'(z)}{g'(z)} \right\}>0, \, k(0)=g(0)=0$ and $\ds\frac{k(0)}{g(0)}=\frac{k'(0)}{g'(0)}=\frac{1}{1+c}$ and we conclude that

$$\frac{k'(z)}{g'(z)}\prec \frac{1}{1+c}\left(\frac{1+z}{1-z}\right).$$
We can apply Lemma 2.1 with $\ds \Phi(z)=\frac{1}{1+c}\left(\frac{1+z}{1-z}\right)$ to conclude that
$$\frac{k(z)}{g(z)}\prec \frac{1}{1+c}\left(\frac{1+z}{1-z}\right)$$
and in particular, $\ds\Re e\left\{ \frac{k(z)}{g(z)} \right\}>0$. Finally, we note that $\ds\frac{k(z)}{g(z)}=\frac{k(z)}{zk'(z)}$ and hence $\ds k(z)=\int^z_0\,\zeta^{c-1} f(\zeta)\,d\zeta$ is starlike. Moreover we conclude that $k$ is $(1+c)-$valent since 
$$ \frac{1}{2\pi}\int^{2\pi}_0\Re e\left\{\frac{re^{i\theta}k'(re^{i\theta})}{k(re^{i\theta})}\right\}\, d\theta=\Re e\left\{\frac{g(0)}{k(0)}\right\}=(1+c).$$

\end{proof}

\bigskip

The next result show that the subclasses $S_M$ are preserved under the Bernardi Transform.\medskip

\begin{lemma}
 If $f\in S_M$,  where $0<M<1$, then ${\mathcal{B}}_cf\in S_M$ for each $ c=0,1,2,3, \cdots$. Hence ${\mathcal{B}}_c(S_M)\subset S_M$.
 \end{lemma}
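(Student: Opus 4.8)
The plan is to run the argument of Lemma 2.2 with the half-plane dominant replaced by a suitable disk adapted to $M$. As there, put
$$g(z)=z^{c}f(z),\qquad k(z)=\int_{0}^{z}\zeta^{c-1}f(\zeta)\,d\zeta,$$
so that $zk'(z)=g(z)$ and $F(z):={\mathcal{B}}_{c}(f)(z)=(1+c)k(z)/z^{c}$. Since $0<M<1$ forces $\Re e\{zf'(z)/f(z)\}>1-M>0$, the function $f$ is starlike, and the computation in Lemma 2.2 shows $g$ is starlike and $(1+c)$-valent; in particular $g$ satisfies the hypothesis of Lemma 2.1, and $k(0)=g(0)=0$ with $k(0)/g(0)=k'(0)/g'(0)=1/(1+c)$.

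Exactly as in Lemma 2.2 one computes
$$\frac{k'(z)}{g'(z)}=\left[\frac{zf'(z)}{f(z)}+c\right]^{-1}.$$
Now $f\in S_{M}$ is precisely the statement $\frac{zf'(z)}{f(z)}\prec 1+Mz$, so $\frac{zf'(z)}{f(z)}+c\prec 1+c+Mz$, and hence
$$\frac{k'(z)}{g'(z)}\prec \frac{1}{1+c+Mz}=:\Phi(z).$$
The map $z\mapsto 1+c+Mz$ carries $\mD$ onto the disk $\{|w-(1+c)|<M\}$, which omits the origin because $M<1\le 1+c$; its image under $w\mapsto 1/w$ is therefore again a disk, so $\Phi$ is convex univalent, and $\Phi(0)=1/(1+c)=k'(0)/g'(0)$. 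Thus Lemma 2.1 applies with this $g$ and $\Phi$ and yields $\frac{k(z)}{g(z)}\prec \Phi(z)$.

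It remains to translate this back to $F$. Since $\frac{k(z)}{g(z)}=\frac{k(z)/z^{c}}{f(z)}=\frac{F(z)}{(1+c)f(z)}$ and $\Phi(\mD)$ misses $0$, passing to reciprocals — legitimate because subordination is preserved under post-composition with $w\mapsto 1/w$ on a region avoiding $0$ — gives $\frac{(1+c)f(z)}{F(z)}\prec 1+c+Mz$. Finally, dividing the defining relation $zF'(z)+cF(z)=(1+c)f(z)$ by $F(z)$ gives $\frac{zF'(z)}{F(z)}+c=(1+c)\frac{f(z)}{F(z)}\prec 1+c+Mz$, whence $\frac{zF'(z)}{F(z)}\prec 1+Mz$; that is, $\left|\frac{zF'(z)}{F(z)}-1\right|<M$ on $\mD$, so ${\mathcal{B}}_{c}(f)\in S_{M}$.

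The substantive content is entirely in Lemma 2.1; the only points needing care are bookkeeping ones: verifying that $\Phi(z)=1/(1+c+Mz)$ is convex univalent — this is exactly where the hypothesis $M<1$ (together with $c\ge 0$) enters, ensuring the relevant disk avoids the origin — confirming the normalization $\Phi(0)=k'(0)/g'(0)$ needed to invoke Lemma 2.1, and noting that inversion preserves the two subordinations used. I do not anticipate any genuine obstacle beyond this.
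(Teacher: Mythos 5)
Your proof is correct, and it is essentially the paper's argument with the two roles in Lemma 2.1 interchanged. The paper takes $g(z)=\int_0^z\zeta^{c-1}f(\zeta)\,d\zeta$ as the starlike, $(1+c)$-valent base function (which is exactly the nontrivial conclusion of Lemma 2.2), takes $k(z)=z^cf(z)$, and applies Lemma 2.1 directly with the affine convex dominant $\Phi(z)=1+c+Mz$ to get $\dfrac{z^cf(z)}{\int_0^z\zeta^{c-1}f(\zeta)\,d\zeta}\prec 1+c+Mz$, after which the identity $\frac{zF'}{F}+c=\frac{(1+c)f}{F}$ finishes the proof with no inversions. You instead keep the Lemma 2.2 configuration ($g=z^cf$, $k=\int$), which buys you something: you only need the easy half of Lemma 2.2 (that $z^cf$ is starlike and $(1+c)$-valent follows in one line from $\frac{zg'}{g}=\frac{zf'}{f}+c$), not the harder conclusion that the integral itself is starlike. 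The price is the dominant $\Phi(z)=1/(1+c+Mz)$ and two passages through $w\mapsto 1/w$; both are justified exactly as you say, since $1+c+Mz$ maps $\mD$ onto a disk omitting the origin (as $1+c-M>0$), its reciprocal image is again a disk and hence convex, and subordination is preserved under post-composition with a map analytic on the dominant's range. All the normalizations you check ($\Phi(0)=k'(0)/g'(0)=1/(1+c)$, the translation $k/g=F/((1+c)f)$, and $zF'+cF=(1+c)f$) are right, so the argument is complete.
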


\begin{proof} Observe that  by (1.1), we have $f\in S_M$ if and only if 
\[ \frac{zf'(z)}{f(z)}\prec 1+Mz. \tag{2.3} \label{eq:special}    \]
For convenience, let $\ds F(z)={\mathcal{B}}_c(f)(z)=\frac{1+c}{z^c}\int^z_0\, \zeta^{c-1}f(\zeta)\,d\zeta$. Since $\ds\frac{zf'(z)}{f(z)}\prec (1+Mz)$ we have

$$\frac{\big(z^cf(z)\big)' }{\big(\int^z_0\,\zeta^{c-1}f(\zeta)\,d\zeta \big)' } = \frac{z^cf'(z)+cz^{c-1}f(z) }{ z^{c-1}f(z)  }=\frac{zf'(z)}{f(z)}+c\prec (1+Mz)+c.$$

\nni By Lemma 2.2,    $\ds g(z)=\int^z_0\,\zeta^{c-1}f(\zeta)\,d\zeta $ is starlike and $(1+c)-$valent, $k(z)=z^cf(z)$ is starlike and $\ds \frac{k(0)}{g(0)}=\frac{k'(0)}{g'(0)}=(1+c)$. We  now apply Lemma 2.1 this time with $\Phi(z)=\left(1+Mz\right)+c$, to conclude

\[\frac{z^cf(z)}{\int^z_0\,\zeta^{c-1}f(\zeta)\,d\zeta  } \prec \left(1+Mz\right)+c \tag{2.4} \label{eq:special}  \]

\nni Observe that since $$\frac{z^cf(z)}{\int^z_0\,\zeta^{c-1}f(\zeta)\,d\zeta  } =\frac{zF'(z)}{F(z)}+c,$$ (2.4) gives

$$\frac{zF'(z)}{F(z)}+c\prec (1+Mz)+c.$$
Thus $\ds\frac{zF'(z)}{F(z)}\prec (1+Mz)$ and so $F\in S_M$.
\end{proof}

\bigskip

\section{Proofs of Main Results}

\begin{proof} {\bf{\underline{Theorem 1.1}}}:

Let $\ds  f\in S_M$ and let $\ds F(z)={\mathcal{B}}_c(f)(z)=\frac{1+c}{z^c}\int^z_0\,\zeta^{c-1}f(\zeta)\, d\zeta, $ its Bernardi Transform.  If $\ds p(z)=\frac{z F'(z)}{F(z)}$, then a calculation shows $f\in S_M$ if and only if

 \begin{equation}\left|\frac{zp'(z)}{p(z)+c}+p(z)-1\right|<M .\tag{3.1} \label{eq:special}\end{equation}

\nni Define $\ds \Lambda(z)\equiv 1+\frac{zF''(z)}{F'(z)}$ and note that 

$$\Lambda(z)=1+\frac{zF''(z)}{F'(z)}=\left\{\frac{zp'(z)}{p(z)}+p(z)\right\}.$$

\nni In order to prove Theorem 1.1, it suffices to prove that $\ds \Re e\big\{\Lambda (z)\Big\}>0$ for all $z\in\mD$, with 
$c$ and $M$ satisfying $$\ds 0<M<\sqrt{c^2+1}-c.$$
This implies the more useful inequality 
\begin{equation}  \Big[(1-c)(1+M)\Big]>-c+M(c+1)+M^2.\tag{3.2} \label{eq:special}\end{equation}

\nni If $c=0$ then (3.1) implies $F$ is convex for all $0<M<1$. Hence we need only consider fixed $c=1, 2, 3,\cdots $ and then $\sqrt{c^2+1}-c\le \sqrt{2}-1$. \medskip

\nni A calculation shows that (3.1) is equivalent to
  
 \begin{equation}\left|\Lambda (z)+(c-1)-\frac{c}{p(z)}\right|<M \left|   1+\frac{c}{p(z)}\right|. \tag{3.3} \label{eq:special}\end{equation}

\nni By Lemma 2.3, $\ds\frac{1}{p(z)}\prec \frac{1}{1+Mz}$ and so $\ds\frac{1}{p(z)}$ lies inside or on the closed disk $\left| z-a\right|\le r$, where $\ds a=\frac{1}{1-M^2}$ and $\ds r=\frac{M}{1-M^2}$ . Fix $z_0\in\mD$, thus 

\[  \frac{1}{p(z_0)}=a+\epsilon r e^{it},\quad 0\le\epsilon\le 1,\,\, -\pi<t\le\pi. \tag{3.4} \label{eq:special}     \]

\nni From (3.3) we get

 \begin{equation}\Big\{\Lambda (z_0)\Big\}+(c-1)-\frac{c}{p(z_0)}=M \delta e^{i\theta}\left(   1+\frac{c}{p(z_0)}\right) \tag{3.5} \label{eq:special}\end{equation}
 and hence 

\begin{equation}\Re e\Big\{\Lambda (z_0)\Big\}=(1-c)+\Re e\left\{\frac{c}{p(z_0)}\right\}+\Re e\left\{ M\delta e^{i\theta}\left(1+\frac{c}{p(z_0)}\right)\right\} \tag{3.6} \label{eq:special}\end{equation}
where $0\le\delta<1,\, -\pi<\theta\le \pi$. For fixed but arbitrary $\delta$ and $\theta$, and letting $\ds\frac{1}{p(z_0)}=w$,  the right hand side  of (3.6) is harmonic as a function of $w$ and hence is minimized for $|w|=r$. Thus we need only minimize (3.6) for $\epsilon=1$ and hence

$$ \Re e\Big\{\Lambda (z_0)\Big\}\ge (1-c)+\Re e\Big\{c(a+re^{it})\Big\}+M\delta\,\Re e\Big\{e^{i\theta}\left((1+ca)+cre^{it}\right)\Big\} $$
Thus, without loss of generality,    

\[ \Re e\Big\{\Lambda (z_0)\Big\}\ge (1-c)+c(a+r\cos t)+M\delta\Big\{(1+ca)\cos\theta+cr\cos (\theta+t)   \Big\} \tag{3.7} \label{eq:special}\]

\nni Note that if the term $M\delta\,\big\{(1+ca)\cos\theta+cr\cos (\theta+t)   \big\}\ge 0 $, then using (3.2), the above inequality (3.7) gives

$$\begin{array}{rl}\ds \Re e\Big\{\Lambda (z_0)\Big\}&\ge (1-c)+c(a+r\cos t)\\
&\ge (1-c)+c(a-r)\\
&\ds =(1-c)+\frac{c}{1+M}\\
&=(1+M)^{-1}\Big\{\big[(1-c)(1+M)\big]+c   \Big\}\\
&>(1+M)^{-1}\Big\{M(c+1)+M^2  \Big\}\\ &>0.   \end{array}$$
While if $M\delta\,\big\{(1+ca)\cos\theta+cr\cos (\theta+t)   \big\}< 0 $, then to minimize $\Re e\Big\{\Lambda (z_0)\Big\}$ we may assume $\delta=1$. If we let $x=\cos\theta$ and $y=\cos t$, then it suffices to prove that  $\Re e\Big\{\Lambda (z_0)\Big\}\ge\phi(x,y)>0$, where

\begin{equation} \phi(x,y)=(1-c)+ c(a+ry)+M(1+ca)x+Mcr\Big\{xy-\sqrt{(1-x^2)(1-y^2)}\,\Big\} \tag{3.8} \label{eq:special}
    \end{equation}
We want to minimize $\phi(x,y)$ over the square $\Omega: [-1, 1]\times[-1, 1]$. We first show $\phi(x,y)>0$ on $\partial \Omega$.\medskip
 
\nni When $y=-1$, with $\ds a=\frac{1}{1-M^2}$ and $\ds r=\frac{M}{1-M^2}$, using (3.2) we get
$$\begin{array}{rl}
\ds \phi(x,-1)&\ds =(1-c)+c(a-r)+M\big(1+c(a-r)\big)x \\
&\ge (1-c)+c(a-r)-M\big(1+c(a-r)\big)  \\
&\ds =(1+M)^{-1}\Big\{\big[(1-c)(1+M)\big]+c-M(1+M+c)\Big\} \\
& >(1+M)^{-1}\Big\{-c+M(c+1)+M^2 +c-M(1+M+c)    \Big\}\\
&=0.
\end{array}$$
Similarly, using (3.2) as above,  if $x=-1$:
    $$\begin{array}{rl}
\ds \phi(-1,y)&\ds =(1-c)+c(a+ry)-M(1+ca)-Mcry \\
&\ge (1-c)+c(a-r)-M(1+ca)+Mcr  \\
&= (1-c)+c(a-r)-M\big(1+c(a-r)\big)  \\
&\ds =(1+M)^{-1}\Big\{\big[(1-c)(1+M)\big]+c-M(1+M+c)\Big\} \\
& >(1+M)^{-1}\Big\{-c+M(c+1)+M^2 +c-M(1+M+c)    \Big\}\\
&=0.
\end{array}$$

  \nni When $y=1$ we obtain  
   $$\begin{array}{rl}
\ds \phi(x,1)&\ds =(1-c)+c(a+r)+M (1+ca)x+Mcrx \\
&\ge (1-c)+c(a+r)-M\big(1+c(a+r)\big)  \\
&=(1-M)^{-1}\big[(1-c)(1-M)+c-M\left(1-M+c\right)  \big] \\
&=(1-M)\\
&>0.

\end{array}$$

\nni The final boundary segment when $x=1$ and using (3.2) once again gives
$$\begin{array}{rl}
\ds \phi(1,y)&\ds =(1-c)+c(a+ry)+M(1+ca) +Mcry \\
&\ge (1-c)+c(a-r)+M(1+ca)-Mcr  \\
&\ds =(1+M)^{-1}\Big\{\big[(1-c)(1+M)\big]+c+M\big\{(1+M)+c\big\}  \Big\} \\
& >(1+M)^{-1}\Big\{\big[-c+M(c+1)+M^2 \big]+c+M(1+M+c)    \Big\}\\
&=2M(1+M)^{-1}(1+c+M)\\
&>0.\\

\end{array}$$

\nni Hence $\Re e\big\{\Lambda (z_0)\big\}\ge \phi(x,y)>0$ on $\partial \Omega$.  Next, for possible critical points, it follows that $\nabla \phi ={\bf{0}}$ at $(x,y)$ when

$$ \frac{\partial \phi}{\partial x}=M(1+ca)+Mcry+\frac{Mcrx(1-y^2) }{\sqrt{(1-x^2)(1-y^2)} } =0$$

\[ \frac{\partial \phi}{\partial y}=cr(1+Mx)+\frac{Mcry(1-x^2) }{\sqrt{(1-x^2)(1-y^2)} } =0. \]

\nni We need to show $\phi(x,y)>0$. The last equation gives 
$$ \sqrt{(1-x^2)(1-y^2)}=- \frac{My\left(1-x^2\right)}{1+Mx}$$
and thus (3.8) shows that $\phi(x,y)$ at any critical point $(x,y)$ has the form:
$$ \phi(x,y)=(1-c)+ c(a+ry)+M(1+ca)x+Mcrxy+Mcr\left( \frac{My(1-x^2) }{1+Mx  } \right). $$

\nni For convenience, let $\psi(x,y)=(1+Mx)\phi(x,y)$, so we have 
 
\[ \psi(x,y)=(1+Mx)\Big\{(1-c)+ c(a+ry)+M(1+ca)x+Mcrxy\Big\}+M^2cry(1-x^2).\tag{3.9} \label{eq:special}\]

\nni It follows that $\ds\frac{\partial\psi}{\partial y}=cr(1+2Mx+M^2)>0$ and hence 
$$\psi(x,y)\ge\psi(x,-1)=(1+Mx)\Big\{(1-c)+c(a-r)+M\big[1+c(a-r)\big]x\Big\}-M^2cr(1-x^2).$$  
A further calculation then gives

$$\begin{array}{rl} 
\ds\frac{1}{M}\,\psi'(x,-1)&= (1-c)+c(a-r)+M\big[1+c(a-r)\big]x+(1+Mx)\big[1+c(a-r)\big]+2Mcrx\\
&\ge  (1-c)+c(a-r)-M\big[1+c(a-r)\big]+(1-M)\big[1+c(a-r)\big]-2Mcr\\
&=\ds  (1-c)+c(a-r)+(1-2M)\big[1+c(a-r)\big]-2Mcr\\
&=\ds  (1-c)+\frac{c}{1+M}+(1-2M)\left[1+\frac{c}{1+M}\right]-2Mc\left(\frac{M}{1-M^2}\right)\\
&\ds =\frac{(1-M)\Big[(1+M)(1-c)\Big]+c(1-M)+(1-2M)\big[(1-M^2)+c(1-M)\big]-2M^2c }{(1-M^2)}\\
\end{array}.$$
Using (3.2) and the fact that $0<M< \sqrt{2}-1$, the last result implies
$$\begin{array}{rl}
\ds\frac{1}{M}\,\psi'(x,-1)&\ds >\frac{(1-M)\Big[-c+M(c+1)+M^2\Big]+c(1-M)+(1-2M)\big[(1-M^2)+c(1-M)\big]-2M^2c }{(1-M^2)}\\
&\ds =\frac{(1-M)(1-M^2)+c(1-2M-M^2) }{(1-M^2)}\\
&>0.
\end{array}$$

\nni Therefore we see that $\psi(x,-1)$ increases with $x$. Finally, from (3.9) and  using (3.2) again, we get

$$\begin{array}{rl}
\ds \psi(x,y)&\ge\psi(x,-1) \\ 
&\ge\psi(-1,-1)\\
&= (1-M)\Big\{(1-c)+c(a-r)-M\big[1+c(a-r)\big]\Big\}  \\
&\ds =\left(\frac{1-M}{1+M}\right)\Big\{ \big[(1+M)(1-c)\big]+c-M\big[(1+M)+c\big]\Big\}\\
&\ds>\left(\frac{1-M}{1+M}\right)\Big\{ \big[-c+M(c+1)+M^2]+c-M\big[(1+M)+c\big]\Big\}\\
&=0.
\end{array}$$
This now gives $\psi(x,y)>0$, and since $\phi(x,y)=(1+Mx)\psi(x,y)>0$, we conclude that $\Re e\big\{\Lambda (z_0)\big\}\ge \phi(x,y)>0$  at any possible critical points as well and the proof is complete. 
\end{proof}

\medskip

\begin{proof} {\bf{\underline{Theorem 1.2}}}:

Every $f\in {\mathcal{F}}$ is starlike and hence by Lemma 2.2,  $\ds g(z)=\int^z_0\zeta^{c-1}f(\zeta)\,d\zeta\in S^*$ and $(1+c)-$valent. Let $\ds F(z)={\mathcal{B}}_c(f)(z)=\frac{1+c}{z^c}\int^z_0\, \zeta^{c-1}f(\zeta)\,d\zeta$ and note that
$$\frac{\big(z^cf(z)\big)' }{\big(\int^z_0\,\zeta^{c-1}f(\zeta)\,d\zeta \big)' } =\frac{zf'(z)}{f(z)}+c\prec \Psi(z)+c.$$
Since $\Psi$ is convex, using Lemma 2.1, we get 
$$\frac{zF'(z)}{F(z)}+c=\frac{\big(z^cf(z)\big) }{\big(\int^z_0\,\zeta^{c-1}f(\zeta)\,d\zeta \big) } \prec \Psi(z)+c.$$ 
Hence $\ds\frac{zF'(z)}{F(z)}\prec \Psi(z)$.
\end{proof}

\bigskip

\section{Libera Transform}

The special case when $c=1$ is the Libera Transform $\ds {\mathcal{B}}_1(f)(z)={\mathcal{L}}(f)(z)=\frac{2}{z}\int^z_0f(\zeta)\,d\zeta$ \, is of particular interest and we obtain the following result.\medskip

\begin{theorem}
If $f\in S_M$ and $0<M<\sqrt{2}-1$, then ${\mathcal{L}}(f)$ is convex. Hence ${\mathcal{L}}(S_M)\subset K$ for $0<M<\sqrt{2}-1$.
\end{theorem}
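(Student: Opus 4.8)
The plan is to obtain Theorem 4.1 as the special case $c=1$ of Theorem 1.1. Indeed, $M_1 = \sqrt{1^2+1}-1 = \sqrt{2}-1$, so the hypothesis $0<M<\sqrt{2}-1$ is exactly $0<M<M_1$, and Theorem 1.1 applied with $c=1$ gives that ${\mathcal{B}}_1(f)={\mathcal{L}}(f)$ is convex whenever $f\in S_M$, hence ${\mathcal{L}}(S_M)\subset K$.

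It is nonetheless instructive to note how dramatically the argument simplifies when $c=1$, and I would include this streamlined version for the reader's benefit. Writing $F={\mathcal{L}}(f)$ and $p(z)=zF'(z)/F(z)$, the membership $f\in S_M$ is equivalent (by the calculation behind (3.1)) to
\begin{equation*}
\left|\frac{zp'(z)}{p(z)+1}+p(z)-1\right|<M,
\end{equation*}
and with $\Lambda(z)=1+zF''(z)/F'(z)=zp'(z)/p(z)+p(z)$ the equivalent form (3.3) collapses, since $c-1=0$, to
\begin{equation*}
\left|\Lambda(z)-\frac{1}{p(z)}\right|<M\left|1+\frac{1}{p(z)}\right|.
\end{equation*}
By Lemma 2.3 one has $1/p(z)\prec 1/(1+Mz)$, so $1/p(z)$ lies in the closed disk $|w-a|\le r$ with $a=(1-M^2)^{-1}$, $r=M(1-M^2)^{-1}$. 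Fixing $z_0$ and setting $w=1/p(z_0)$, one gets $\RZ\{\Lambda(z_0)\}=\RZ\{w\}+\RZ\{M\delta e^{i\theta}(1+w)\}$ for some $0\le\delta<1$; since the right side is harmonic in $w$ it is minimized on $|w-a|=r$, and a short estimate gives $\RZ\{\Lambda(z_0)\}\ge \phi(x,y)$ with $\phi$ the $c=1$ specialization of (3.8). One then checks $\phi>0$ on the square $[-1,1]^2$: on the boundary the four one-variable estimates in the proof of Theorem 1.1 with $c=1$ all reduce to quantities like $(1-M)$ or $2M(1+M)^{-1}(2+M)$ that are manifestly positive for $0<M<\sqrt2-1$, and at any interior critical point the auxiliary function $\psi=(1+Mx)\phi$ is increasing in $y$, then the reduced function $\psi(x,-1)$ is increasing in $x$, so $\psi\ge\psi(-1,-1)=(1-M)\{(1-1)+(a-r)-M[1+(a-r)]\}>0$ using $a-r=(1+M)^{-1}$.

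The only genuine point requiring care is the same one as in Theorem 1.1: verifying that inequality (3.2), namely $(1-c)(1+M)>-c+M(c+1)+M^2$, actually holds under $0<M<\sqrt{c^2+1}-c$; for $c=1$ this is just $0>-1+2M+M^2$, i.e. $M^2+2M-1<0$, which holds precisely for $M<\sqrt2-1$. So the threshold $\sqrt2-1$ is exactly where the key algebraic inequality turns, and no separate obstacle arises. I would therefore present Theorem 4.1 as an immediate corollary of Theorem 1.1, optionally followed by the remarks above isolating why $c=1$ makes the $(c-1)/p(z)$ term vanish and collapses the estimate.

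\begin{proof} This is the case $c=1$ of Theorem 1.1, since $M_1=\sqrt{1+1}-1=\sqrt2-1$.
\end{proof}
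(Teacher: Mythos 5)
Your proof is correct and is exactly the paper's proof: Theorem 4.1 is obtained by setting $c=1$ in Theorem 1.1, where $M_1=\sqrt{2}-1$. The additional remarks correctly trace how the general argument specializes (in particular that (3.2) becomes $M^2+2M-1<0$, which is precisely $M<\sqrt{2}-1$), but they are optional commentary rather than a different route.
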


\begin{proof}
Let $c=1$ in Theorem 1.1.
\end{proof}

\medskip

Oros and Oros [16] stated that $f^*(z)=z+\frac{3}{8}z^2\in S_{\frac{3}{5}}$, but ${\mathcal{L}}(f^*) $ is {\em{not}} convex and hence the upper bound for the best possible $M_{\mathcal{L}}$ is $\frac{3}{5}$. Our Theorem 4.1 improves their lower bound for this best constant $M_{\mathcal{L}}$ so that  ${\mathcal{L}}(S_{M_\mathcal{L}})\subset K$ and we can also  improve their upper bound as follows.\medskip

\begin{theorem}
If  ${\mathcal{L}}(S_M)\subset K$, then $ M\le\frac{1}{2}$.\end{theorem}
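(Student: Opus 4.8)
The plan is to exhibit an explicit function in $S_M$ for a value of $M$ slightly below $\tfrac12$ (or, more precisely, a one‑parameter family) whose Libera transform fails to be convex as $M\uparrow\tfrac12$, thereby forcing the best constant to satisfy $M_{\mathcal L}\le\tfrac12$. The natural candidate is the polynomial $f_M(z)=z+\tfrac{M}{2}z^2$, since $\tfrac{zf_M'(z)}{f_M(z)}-1=\tfrac{Mz/2}{1+Mz/2}$ maps $\mD$ into the disk of radius $<M$ about $0$ (one checks $\bigl|\tfrac{w}{2+w}\bigr|<1$ for $|w|<1$, but more carefully that the image of $|z|<1$ under $z\mapsto\tfrac{Mz/2}{1+Mz/2}$ lies in $|\zeta|<M$), so $f_M\in S_M$. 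Its Libera transform is computed directly from (1.3) with $c=1$: $\mathcal L(f_M)(z)=\tfrac{2}{z}\int_0^z(\zeta+\tfrac{M}{2}\zeta^2)\,d\zeta=z+\tfrac{M}{3}z^2=:F_M(z)$.

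Next I would test convexity of $F_M$ by examining $1+\tfrac{zF_M''(z)}{F_M'(z)}$ at the boundary point where it is smallest, namely $z=-1$. Here $F_M'(z)=1+\tfrac{2M}{3}z$ and $F_M''(z)=\tfrac{2M}{3}$, so
\begin{equation*}
1+\frac{zF_M''(z)}{F_M'(z)}=1+\frac{\tfrac{2M}{3}z}{1+\tfrac{2M}{3}z},
\end{equation*}
and at $z=-1$ this equals $1-\tfrac{2M/3}{1-2M/3}=\tfrac{1-4M/3}{1-2M/3}$. This is $\le 0$ precisely when $1-\tfrac{4M}{3}\le 0$, i.e. $M\ge\tfrac34$. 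That only gives $M_{\mathcal L}<\tfrac34$, which is weaker than claimed, so the quadratic family is \emph{not} sharp enough; the linear-term-only polynomial is the wrong test function for the improved bound $\tfrac12$.

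So the real work — and the main obstacle — is to find the \emph{right} extremal (or near-extremal) function. A better candidate is obtained by pushing the subordination $\tfrac{zf'(z)}{f(z)}\prec 1+Mz$ to its boundary: take $\tfrac{zf'(z)}{f(z)}=1+Mz$ exactly (not merely a subordinate), which by integrating $\tfrac{f'}{f}=\tfrac1z+M$ yields $f_M(z)=z\,e^{Mz}$. This $f_M$ lies in $\overline{S_M}$ (it is a limit of functions in $S_M$ as one scales $M$ up from below), so any open class $\mathcal L(S_M)\subset K$ must already fail for $M$ approaching the threshold where $\mathcal L(ze^{Mz})$ loses convexity. I would then compute $F_M(z)=\mathcal L(f_M)(z)=\tfrac2z\int_0^z \zeta e^{M\zeta}\,d\zeta$, which integrates by parts to $F_M(z)=\tfrac{2}{M^2 z}\bigl(e^{Mz}(Mz-1)+1\bigr)$, evaluate $F_M'(z)$ and $F_M''(z)$ in closed form, and examine $\RE\{1+zF_M''(z)/F_M'(z)\}$ — again the critical point on $\partial\mD$ should be $z=-1$ by symmetry (all coefficients are real and positive in the relevant expansions). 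Setting the resulting expression in $M$ equal to zero should produce an equation whose relevant root is exactly (or bounds) $\tfrac12$; if it gives something slightly above $\tfrac12$ I would instead perturb and use a two-parameter family $f(z)=z(1+\lambda z)e^{\mu z}$ or directly optimize over the extreme points of the coefficient body of $S_M$. The crux is verifying that $z=-1$ genuinely minimizes the real part of $1+zF''/F'$ over $\overline{\mD}$ — this needs either an explicit computation of the real part as a function of $\theta$ for $z=e^{i\theta}$ and a monotonicity argument, or an appeal to the fact that $F'$ and $F$ have positive Taylor coefficients so that the worst case for the argument of $zF''/F'$ occurs at $z=-1$. Once that localization is justified, the bound $M\le\tfrac12$ follows from a single algebraic inequality in $M$.
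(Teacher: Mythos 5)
Your overall strategy --- produce explicit members of $S_M$ whose Libera transforms fail to be convex --- is the right one, but neither of your candidate functions can deliver the bound $\tfrac12$, and the one idea that makes the paper's proof work is missing. First, your quadratic $z+\tfrac{M}{2}z^2$ is far from extremal in $S_M$: its quantity $\bigl|\tfrac{zf'}{f}-1\bigr|$ has supremum $\tfrac{M}{2-M}<M$, which is why you only got $\tfrac34$. The correct normalization $z+\lambda z^2$ with $\lambda=\tfrac{M}{1+M}$ (so the supremum equals $M$) gives the threshold $\tfrac35$ --- exactly the Oros--Oros bound, and no better. Second, your replacement $f_M(z)=ze^{Mz}$ does lie in $S_M$, but it is \emph{worse}, not better: for $F=\mathcal{L}(ze^{Mz})$ one has $F'(z)+zF''(z)=1+\sum_{k\ge1}\tfrac{2(k+1)^2}{k!\,(k+2)}M^kz^k$, and at $z=-1$, $M=\tfrac12$ this sums to about $0.115$ while $F'(-1)\approx0.49$, so $1+zF''/F'\approx0.23>0$ there; numerically this family does not lose convexity until $M\approx0.62>\tfrac35$. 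So the very point you single out as critical certifies that this test function cannot prove $M\le\tfrac12$. Your fallback ("perturb," "optimize over extreme points of the coefficient body") is not an argument.

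The missing idea is to let the \emph{degree} go to infinity. The paper takes $f_0(z)=z+\lambda z^m$ with $\lambda=\tfrac{M}{m-1+M}$, so that $\bigl|\tfrac{zf_0'}{f_0}-1\bigr|<\tfrac{(m-1)\lambda}{1-\lambda}=M$ and hence $f_0\in S_M$. Then $\mathcal{L}(f_0)(z)=z+\alpha z^m$ with $\alpha=\tfrac{2\lambda}{m+1}$ and
$$1+\frac{zF_0''(z)}{F_0'(z)}=\frac{1+\alpha m^2z^{m-1}}{1+\alpha mz^{m-1}},$$
which vanishes at a point of $\mD$ precisely when $\alpha m^2>1$, i.e.\ when $M>\mu(m)=\tfrac{m^2-1}{2m^2-m-1}=\tfrac{m+1}{2m+1}$. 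Since $\mu(m)$ decreases to $\tfrac12$, every $M>\tfrac12$ is beaten by some finite $m$, giving the contradiction. No single fixed test function in this story achieves $\tfrac12$; the bound is obtained only as the limit of the thresholds $\tfrac{m+1}{2m+1}$ over the whole sequence of binomials. As written, your proposal therefore has a genuine gap at its central step.
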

\medskip

\begin{proof} Assume that ${\mathcal{L}}(S_M)\subset K$, with  $\frac{1}{2}<M<1$. Since   $\ds \mu (n)=\frac{n^2-1}{2n^2-n-1}$ decreases for $n\ge 2$ and $\ds\lim_{n\rightarrow\infty}\mu(n)=\frac{1}{2}$, there exists an integer $m\ge 2$ such that

\[  M>\frac{m^2-1}{2m^2-m-1}. \tag{4.1} \label{eq:special}\]
Let $f_0(z)=z+\lambda z^m$, where $\ds\lambda=\frac{M}{m-1+M}$. It follows that

$$\left|\frac{zf'_0(z)}{f_0(z)}-1\right|=\left|\frac{(m-1)\lambda z^{m-1}}{1+\lambda z^{m-1}}\right|<\frac{(m-1)\lambda}{1-\lambda}=M$$
and hence $f_0\in S_M$. Its Libera Transform is 
$$F_0(z)=\frac{2}{z}\int^z_0 f_0(\zeta)\, d\zeta=z+\alpha z^m,$$
where $\ds \alpha=\frac{2\lambda}{m+1}$, and

\[  1+\frac{zF''_0(z)}{F'_0(z)}=\frac{1+\alpha m^2 z^{m-1}}{1+\alpha m z^{m-1} }. \tag{4.2} \label{eq:special}\]
Because $m\ge 2$, the functions $1+\alpha m^2 z^{m-1}$ and $1+\alpha m z^{m-1}$ cannot both vanish for the same $z$.
Using (4.1) we conclude that 

$$\frac{1}{\alpha m^2}=\frac{(m^2-1)+(m+1)M   }{ 2m^2M }<1$$
and hence from (4.2), the point $\ds z_1=\left\{\frac{1}{\alpha m^2}\right\}^{\frac{1}{m-1}}\in\mD$, and we have $\ds 1+\frac{z_1F''_0(z_1)}{F'_0(z_1)}=0$. Thus $F_0$ cannot be convex and a contradiction arises.\end{proof}
\bigskip

\nni Our work proves that $$\sqrt{2}-1\le M_{\mathcal{L}} \le\frac{1}{2}.$$
While improving the result in [16], this doesn't yet give the best possible $M_{\mathcal{L}}$. However, our work does strongly suggest the following:
\bigskip

\nni {\bf{\underline{Conjecture:}}} If $f\in {\mathcal{H}}_0(\mD)$ and $\ds\left|\frac{zf'(z)}{f(z)}-1\right|<M$, then \, $\ds {\mathcal{L}}\big\{f\big\}\in K \iff M<\sqrt{2}-1.$

\noindent In other words, the constant $M_{\mathcal{L}}=\sqrt{2}-1$ is best possible.
\bigskip

 \nni Finally, it is of interest to note that Oros and Oros [16] also proved that if $f\in S_M$ and $0<M<1$, then its Libera Transform $F\in S_R$, where 
$$ R\le \frac{M-3+\sqrt{M^2+2M+9}}{2}  <M.$$
For fixed, $0<M<1$, it is still also an open problem to find the smallest possible $R$.

\bigskip

\section{Remarks}

The ideas presented here can be adapted and extended in various other directions. For example we can generalize Theorem 1.2 to include nonlinear integral transforms and also $p-$valent functions as follows.  For $p\in\mN$, let ${\mathfrak{A}}(p)$ denote the class of $p-$valent functions  $f\in{\mathcal{H}}(\mD)$  of the form $\ds f(z)=a_pz^p+\sum^{\infty}_{n=p+1}\, a_n z^n$, where $a_p\ne 0$. For $\alpha >0$ and $c\in\mR$, Kumar and Shukla [9] defined this nonlinear integral operator on ${\mathfrak{A}}(p)$:
\[ T\big(f(z);p, \alpha, c\big)=\left[ \frac{c+p\alpha}{z^c}\int^z_0\,\zeta^{c-1} f^{\alpha}(\zeta)\, d\zeta  \right]^{\frac{1}{\alpha}}.\tag{5.1} \label{eq:special}\]
This transform, related to Bazilevi${\breve{\rm{c}}}$ functions, includes the Bernardi transform as well as other transforms  previously studied by many authors. They defined the subclass of starlike functions
$$ S^*_p(A,B)=\left\{ f\in {\mathfrak{A}}(p):\, \frac{zf'(z)}{f(z)}\prec p\left(\frac{1+Az}{1+Bz}\right),\,-1\le B<A\le 1\right\}$$  and proved that  $T\left(S^*_p(A,B);p, \alpha, c\right)\subset S^*_p(A,B)$ whenever $c\ge -p\alpha \left(\frac{1-A}{1-B}\right)$.

We can prove the following general result on invariant subclasses with respect to this transform, which includes their theorem when $\alpha$ and $c$ are integers:

\medskip

\begin{theorem}
 Let $\alpha, p\in\mN$ and $c=0, 1, 2, \cdots$ and let $\Psi (z)$ be a convex univalent function in ${\mathcal{H}}(\mD)$ with $\Psi(0)=p$ and $ \ds\Re e \,\big\{\Psi(z)\big\}>-\frac{c}{\alpha}$, \, for $z\in\mD$. If  $${\mathcal{F}}=\left\{f\in \mathfrak{A}(p):\, \frac{zf'(z)}{f(z)}\prec \Psi(z)\right\},$$
then $T\Big({\mathcal{F}};p, \alpha, c\Big)\subset {\mathcal{F}}$.
\end{theorem}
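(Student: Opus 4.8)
The plan is to mimic the proof of Theorem 1.2, replacing the linear building blocks $z^cf(z)$ and $\int_0^z \zeta^{c-1}f(\zeta)\,d\zeta$ with their nonlinear Bazilevi\v{c}-type analogues adapted to the operator $T(f;p,\alpha,c)$. First I would set $F(z)=T\big(f(z);p,\alpha,c\big)$, so that by definition $F^{\alpha}(z)=\frac{c+p\alpha}{z^c}\int_0^z \zeta^{c-1}f^{\alpha}(\zeta)\,d\zeta$, equivalently $z\,(F^{\alpha})'(z)+c\,F^{\alpha}(z)=(c+p\alpha)\,f^{\alpha}(z)$. Dividing by $F^{\alpha}$ gives the key identity
$$\alpha\,\frac{zF'(z)}{F(z)}+c=(c+p\alpha)\,\frac{f^{\alpha}(z)}{F^{\alpha}(z)},$$
which plays the role that $\frac{zF'(z)}{F(z)}+c=\dfrac{z^cf(z)}{\int_0^z\zeta^{c-1}f(\zeta)\,d\zeta}$ played before. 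So the goal reduces to showing that $\alpha\,\dfrac{zF'(z)}{F(z)}+c\prec \alpha\Psi(z)+c$, which by the identity is the same as showing $(c+p\alpha)\,\dfrac{f^{\alpha}(z)}{F^{\alpha}(z)}\prec \alpha\Psi(z)+c$, and then dividing the subordination by $\alpha$ and using that $\Psi$ is convex (hence $\alpha\Psi+c$ is convex univalent, and division by $\alpha$ preserves convexity) to conclude $\dfrac{zF'(z)}{F(z)}\prec\Psi(z)$, i.e. $F\in{\mathcal F}$.

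To produce that subordination I would apply Lemma 2.1 with the choices $g(z)=\big(z^{p}f(z)\big)$-type object and $k(z)$ the antiderivative appearing inside $T$. Concretely, set $k(z)=\int_0^z \zeta^{c-1}f^{\alpha}(\zeta)\,d\zeta$ and $g(z)=z^{c}F^{\alpha}(z)=(c+p\alpha)\,k(z)$ -- wait, more usefully, take $g(z)=z^c f^{\alpha}(z)/\text{(something)}$; the cleaner route is: let $g(z)=\big(\int_0^z\zeta^{c-1}f^{\alpha}(\zeta)\,d\zeta\big)$ and $k(z)=z^c f^{\alpha}(z)\big/(\text{normalization})$ so that $zk'(z)$ relates to $g$ appropriately. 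One computes $\dfrac{k'(z)}{g'(z)}=\dfrac{(z^cf^{\alpha})'(z)}{z^{c-1}f^{\alpha}(z)}=\alpha\,\dfrac{zf'(z)}{f(z)}+c\prec \alpha\Psi(z)+c$. For Lemma 2.1 to apply I need $g$ to map $\mD$ onto a finitely-sheeted starlike region: here $g(z)=\int_0^z\zeta^{c-1}f^{\alpha}(\zeta)\,d\zeta$, and since $f\in{\mathcal F}$ is starlike (because $\Re e\,\Psi>-c/\alpha\ge 0$ when $c\ge0$, forcing $\Re e\frac{zf'}{f}>0$), the function $f^{\alpha}$ — more precisely $z^{p\alpha}$ times a nonvanishing analytic function — together with the $\zeta^{c-1}$ factor makes the integrand $\zeta^{c-1}f^{\alpha}(\zeta)$ of the form $\zeta^{\,c-1+p\alpha}\cdot(\text{nonvanishing})$, so I expect a Lemma-2.2-type argument to show $g$ is starlike and $(c+p\alpha)/?$-valent; I would prove this intermediate claim by checking $\Re e\{zg'(z)/g(z)\}>0$ via $\frac{zg'(z)}{g(z)}=\frac{z^cf^{\alpha}(z)}{\int_0^z\zeta^{c-1}f^{\alpha}(\zeta)\,d\zeta}$ and another application of Lemma 2.1, exactly paralleling Lemma 2.2 but with $f$ replaced by $f^{\alpha}/z^{(p-1)\alpha}$ or handled directly since $f^\alpha\in{\mathfrak A}(p\alpha)$ inherits starlikeness-with-shift.

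The main obstacle I anticipate is the $p$-valence/starlikeness bookkeeping for the nonlinear integrand: $f^{\alpha}$ need not be univalent, so I cannot directly cite Lemma 2.2, which is stated for $f\in S^*$. I would instead observe that $f\in{\mathcal F}$ means $\frac{zf'(z)}{f(z)}\prec\Psi(z)$ with $\Psi(0)=p$, hence $\frac{z(f^\alpha)'(z)}{f^\alpha(z)}=\alpha\frac{zf'(z)}{f(z)}\prec\alpha\Psi(z)$, a function with positive real part (since $\Re e\,\alpha\Psi>-c\ge 0$ implies $\Re e\,\alpha\Psi\ge 0$, and convexity/openness upgrades $\ge$ to $>$ on $\mD$ unless $\alpha\Psi$ is constant), so $f^\alpha$ is itself starlike-like in the generalized sense; combined with the leading term $z^{p\alpha}$, the integrand $\zeta^{c-1}f^\alpha(\zeta)$ is then exactly of the shape needed for the Libera–Brown lemma (Lemma 2.1) to give $g$ starlike and finitely-sheeted. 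The remaining steps — verifying $k(0)=g(0)=0$, computing $k'(0)/g'(0)$ to match $\Psi(0)+$ shift appropriately at the origin, and unwinding the subordination through the $\alpha$-th power — are routine once the valence claim is in hand, and the hypothesis $\Re e\,\Psi(z)>-c/\alpha$ is precisely what guarantees the target region $\alpha\Psi(\mD)+c$ lies in the right half-plane so that taking $\alpha$-th roots and forming $F=T(f;p,\alpha,c)$ causes no branch ambiguity.
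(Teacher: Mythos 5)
Your proposal takes essentially the same route as the paper's proof: the identity $\alpha\,\frac{zF'}{F}+c=\frac{z^cf^{\alpha}(z)}{\int_0^z\zeta^{c-1}f^{\alpha}(\zeta)\,d\zeta}$, a first application of Lemma 2.1 (with a half-plane $\Phi$) to show $\int_0^z\zeta^{c-1}f^{\alpha}(\zeta)\,d\zeta$ is starlike and $(\alpha p+c)$-valent, then a second application with $\Phi=\alpha\Psi+c$ to obtain $\alpha\,\frac{zF'}{F}+c\prec\alpha\Psi+c$. The one blemish is the chain ``$\Re e\,\Psi>-c/\alpha\ge 0$'' (false for $c>0$, and so $f$ itself need not be classically starlike); what you actually need, and what the hypothesis delivers directly, is $\Re e\{\alpha\Psi(z)+c\}>0$, which makes $z^cf^{\alpha}(z)$ starlike and drives both applications of Lemma 2.1.
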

\medskip

\begin{proof}
Let $f\in {\mathcal{F}}$, then $f$ is starlike. For convenience let 
$$F(z)=\left[ \frac{c+p\alpha}{z^c}\int^z_0\,\zeta^{c-1} f^{\alpha}(\zeta)\, d\zeta  \right]^{\frac{1}{\alpha}}.$$
Set $g(z)=z^cf^{\alpha}(z)$ and $k(z)=\ds \int^z_0\,\zeta^{c-1} f^{\alpha}(\zeta)\, d\zeta$. It follows that 
$$\frac{zg'(z)}{g(z)}=\alpha\left(\frac{zf'(z)}{f(z)}\right)+c\prec \alpha \,\Psi(z)+c,$$
thus $g$ is starlike and since $f$ is $p-$valent, the function $g$ is $(\alpha p+c)-$valent.

Now it follows that 
$$\frac{k'(z)}{g'(z)}=\frac{1}{\alpha\left(\frac{zf'(z)}{f(z)}\right)+c}\prec \left[\alpha \,\Psi(z)+c\right]^{-1}$$
and hence $\ds \Re e\left\{\frac{k'(z)}{g'(z)}\right\}>0, k(0)=g(0)=0$ and $\ds\frac{k(0)}{g(0)}=\frac{k'(0)}{g'(0)}=\frac{1}{\alpha p+c}$. Apply Lemma 2.1 with $\ds \Phi(z)=\frac{1}{\alpha p+c}\left(\frac{1+z}{1-z}\right)$ to obtain $\ds \Re e\left\{\frac{k(z)}{g(z)}\right\}>0$ and since $\ds\frac{k(z)}{g(z)}=\frac{k(z)}{zk'(z)}$ we have $\ds \Re e\left\{\frac{k(z)}{zk'(z)}\right\}>0$. Thus $k(z)=\ds \int^z_0\,\zeta^{c-1} f^{\alpha}(\zeta)\, d\zeta$ is starlike and $(\alpha p+c)$-valent since
$$\frac{1}{2\pi}\int^{2\pi}_0 \Re e\left\{\frac{re^{i\theta}k'(re^{i\theta})}{k(re^{i\theta})}\right\}=\frac{g(0)}{k(0)}=\alpha p+c.$$

Next, this time we set  $k(z)=z^cf^{\alpha}(z)$ and $g(z)=\ds \int^z_0\,\zeta^{c-1} f^{\alpha}(\zeta)\, d\zeta$ and by the above, both  $g$ and $k$ are starlike and $(\alpha p+c)$-valent, $\ds k(0)=g(0)=0, \frac{k(0)}{g(0)}=\frac{k'(0)}{g'(0)}=(\alpha p+c)$. A calculation gives 
$$\frac{k'(z)}{g'(z)}=\alpha\left(\frac{zf'(z)}{f(z)}\right)+c\prec \alpha \Psi(z)+c$$
and apply Lemma 2.1 with $\Phi(z)=\big(\alpha \Psi(z)+c\big)$ to conclude that
$$\alpha\left(\frac{zF'(z)}{F(z)}\right)+c=\frac{k(z)}{g(z)}=\prec \alpha \Psi(z)+c.$$
Hence $F\in {\mathcal{F}}$.

\end{proof}

\noindent  The result of Kumar and Shukla (for $\alpha\in\mN, c=0, 1, 2, \cdots$) now follows if we simply choose the convex function $\ds \Psi(z)=p\left(\frac{1+Az}{1+Bz}\right)$.
\medskip

Our method may be applied to other classes of functions as follows: 
\begin{theorem} Let $\ds {\mathcal{F}}=\left\{f\in H_0(\mD):\, e^{i\beta}\frac{zf'(z)}{f(z)}\prec \Psi(z)\right\}$, 
where $\Psi$ is convex, $\Psi(0)=e^{i\beta}$, and $\ds \Re e \, \Psi(z)>0$ for all $z\in\mD$. If $\ds |\beta|<\frac{\pi}{2}$, then $T\left({\mathcal{F}};p, \alpha, c   \right)\subset {\mathcal{F}}$. That is, $T\left(f;p, \alpha, c   \right)$ preserves ${\mathcal{F}}$.
\end{theorem}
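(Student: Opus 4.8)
The plan is to run the proof of Theorem~5.2 essentially verbatim, with ``starlike'' replaced throughout by ``$\beta$-spirallike'' (that is, by $\Re e\{e^{i\beta}\,zh'(z)/h(z)\}>0$); the one genuinely new ingredient is a $\beta$-spirallike analogue of the generalized Libera lemma (Lemma~2.1). First I would observe that, since $\Psi(\mD)$ lies in the right half plane, $f\in\mathcal{F}$ forces $\Re e\{e^{i\beta}\,zf'(z)/f(z)\}>0$, so $f$ is $\beta$-spirallike and, because $|\beta|<\pi/2$, univalent.

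Next, exactly as in Theorem~5.2, put $g(z)=z^{c}f^{\alpha}(z)$. Then $zg'/g=c+\alpha\,zf'/f$, so $e^{i\beta}\,zg'/g\prec ce^{i\beta}+\alpha\Psi$, and $\Re e\{ce^{i\beta}+\alpha\Psi(z)\}=c\cos\beta+\alpha\,\Re e\,\Psi(z)>0$ (using $c\ge0$, $\alpha\in\mN$, $|\beta|<\pi/2$); hence $g$ maps $\mD$ onto an $(\alpha p+c)$-sheeted region that is $\beta$-spirallike with respect to the origin. Running the same two-step bootstrap used in Lemma~2.2 and Theorem~5.2 --- with $\beta$-spirallikeness in place of starlikeness, and the relevant convex dominant in place of $\frac{1}{\alpha p+c}\,\frac{1+z}{1-z}$ --- one then shows that $k(z)=\int_0^{z}\zeta^{c-1}f^{\alpha}(\zeta)\,d\zeta$ is itself $\beta$-spirallike and $(\alpha p+c)$-valent.

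Finally, interchange the roles: take $k=z^{c}f^{\alpha}$ and $g=\int_0^{z}\zeta^{c-1}f^{\alpha}(\zeta)\,d\zeta$, both $\beta$-spirallike and $(\alpha p+c)$-valent, with $k(0)=g(0)=0$ and $k'(0)/g'(0)=k(0)/g(0)=\alpha p+c$. Here $k'/g'=c+\alpha\,zf'/f\prec c+\alpha e^{-i\beta}\Psi$, which is again convex univalent (a rotation, dilation and translation of the convex domain $\Psi(\mD)$). The $\beta$-spirallike Libera lemma then gives $k/g\prec c+\alpha e^{-i\beta}\Psi$. Writing $F=T(f;p,\alpha,c)$ one has $z^{c}F^{\alpha}=(c+p\alpha)g$, hence $k/g=zg'/g=c+\alpha\,zF'/F$; therefore $c+\alpha\,zF'/F\prec c+\alpha e^{-i\beta}\Psi$, i.e.\ $zF'/F\prec e^{-i\beta}\Psi$, i.e.\ $e^{i\beta}\,zF'/F\prec\Psi$, so $F\in\mathcal{F}$.

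The hard part is the $\beta$-spirallike analogue of Lemma~2.1: if $g$ maps $\mD$ onto a finitely-sheeted region $\beta$-spirallike with respect to the origin, $\Phi$ is convex univalent, $k(0)=g(0)=0$, $k'(0)/g'(0)=\Phi(0)$ and $k'/g'\prec\Phi$, then $k/g\prec\Phi$. I would attempt Libera's device, modified by integrating $K=k\circ g^{-1}$ from $0$ to $w_0=g(z_0)$ along the logarithmic spiral $\sigma(t)=w_0\exp(-t(1-i\tan\beta))$, $0\le t<\infty$ --- which lies in the chosen $\beta$-spiral sheet $\Omega$ and runs to $0$ because $\Re e(1-i\tan\beta)=1>0$, i.e.\ because $|\beta|<\pi/2$ --- in place of the segment $[0,w_0]$ (which need not lie in $\Omega$). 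This writes $k(z_0)/g(z_0)=K(w_0)/w_0$ as the average of the values of $k'/g'$ along $\sigma$ against the unit-mass kernel $(1-i\tan\beta)e^{-t(1-i\tan\beta)}\,dt$; for $\beta=0$ the kernel is a probability density and one recovers Lemma~2.1. The delicate point --- the step I expect to cost the most --- is then to show that this ``spiral average'' of points of the convex region $\Phi(\mD)$ still lands in $\Phi(\mD)$, which is where the convexity of $\Phi$, the positivity $\Re e\,\Psi>0$, and the restriction $|\beta|<\pi/2$ all have to be exploited. Equivalently, using $c+\alpha\,zF'/F=(c+p\alpha)(f/F)^{\alpha}$ one can recast the statement as the Briot--Bouquet differential subordination $Q+zQ'/(\alpha e^{-i\beta}Q+c)\prec\Psi$ for $Q=e^{i\beta}\,zF'/F$ (with $Q(0)=\Psi(0)=e^{i\beta}$), whose conclusion $Q\prec\Psi$ is exactly what is wanted; there the needed admissibility hypothesis is once more the positivity of $\Re e\{\alpha e^{-i\beta}\Psi(z)+c\}$, so the two routes present the same essential difficulty.
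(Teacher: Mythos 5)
The paper states this theorem without proof --- it is offered only as a further application of the method of Theorem~5.1 --- so there is no argument of the author's to compare yours against line by line; judged on its own, your proposal correctly locates the difficulty but does not resolve it. The bookkeeping you do is right and does follow the paper's template: $g=z^{c}f^{\alpha}$ satisfies $e^{i\beta}zg'/g\prec ce^{i\beta}+\alpha\Psi$ with positive real part, the identity $c+\alpha zF'/F=(c+p\alpha)(f/F)^{\alpha}=k/g$ is correct, and the final subordination chain would indeed yield $F\in\mathcal{F}$. But, as you observe, every step of the bootstrap now calls on a spirallike analogue of Lemma~2.1, because $g$ is only $\beta$-spirallike and not starlike: $zg'/g$ ranges over a half-plane whose boundary line passes through $c$ in the direction $ie^{-i\beta}$, which meets $\{\RE w<0\}$ as soon as $\beta\neq0$, so Lemma~2.1 as proved in the paper genuinely does not apply. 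That analogue is exactly what you leave open, and the route you sketch for it does not close: replacing the ray by the logarithmic spiral expresses $K(w_0)/w_0$ as an average of values of $k'/g'$ against the kernel $(1-i\tan\beta)e^{-t(1-i\tan\beta)}\,dt$, which has total mass $1$ but is complex-valued, and a complex-weighted mean of points of a convex set need not lie in that set --- convexity of $\Phi(\mD)$ alone buys you nothing here.

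Your fallback, the Briot--Bouquet subordination $Q+zQ'/(\alpha e^{-i\beta}Q+c)\prec\Psi$, is set up correctly, but the admissibility condition it requires, $\RE\{\alpha e^{-i\beta}\Psi(z)+c\}>0$, is not implied by the hypotheses $\RE\Psi>0$ and $|\beta|<\pi/2$. Indeed it fails for the canonical choice $\Psi(z)=e^{i\beta}(1-z)/(1+ze^{2i\beta})$: there $e^{-i\beta}\Psi$ fills the rotated half-plane $\{w:\RE(e^{i\beta}w)>0\}$, which contains points of arbitrarily negative real part once $\beta\neq0$, so the translate by $c$ does not help. Both of your routes therefore stall at the same essential point, and the gap is genuine: to complete the proof you would need either a correctly formulated and proved spirallike version of Lemma~2.1 (with hypotheses strong enough to control the complex averaging), or one of the sharper Briot--Bouquet results of Miller and Mocanu whose (weaker) admissibility hypotheses you would then have to verify for the given $\Psi$, $\alpha$, $\beta$, $c$. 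As it stands, the proposal is a plausible reduction of the theorem to an unproved lemma, not a proof.
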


\nni If we let $\ds \Psi(z)=e^{i\beta}\left(\frac{1-z}{1+ze^{2i\beta}}\right)$, then  ${\mathcal{F}}$ is the  class of univalent spirallike functions studied by [21, 8, 12]. The above theorem is apparently the first of its kind for such functions.

There are various other transforms that are also amenable to our techniques. For instance for $\beta\in\mC$, Kim and Merkes [8] introduced  the transform  $$\ds \left(I_{\beta}f\right)(z)=\int^z_0\left(\frac{f(\zeta)}{\zeta}\right)^{\beta} d\zeta$$ and proved that $I_{\beta}$ maps $S$ into $S$, provided $|\beta|\le \frac{1}{4}$. It is readily shown that a result analogous to Theorem 5.1 is also true for this new transform:

\medskip

\begin{theorem}
 Let $\beta\in\mC$ and let $\Psi (z)$ be a convex univalent function in ${\mathcal{H}}(\mD)$ with $\Psi(0)=1$ and $\ds\Re e \Big\{\beta \Psi(z)+(1-\beta\Big\}>0$ \, for $z\in\mD$. If  $${\mathcal{G}}=\left\{f\in \mathfrak{A}(p):\, \frac{zf'(z)}{f(z)}\prec \Psi(z)\right\},$$
then  $I_{\beta}\Big({\mathcal{G}}\Big)\subset {\mathcal{G}}$.
\end{theorem}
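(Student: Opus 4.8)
\nni The plan is to follow the proofs of Theorem 1.2 and Theorem 5.1: produce the analogue of the Bernardi identity for $I_{\beta}$ and then apply Lemma 2.1 twice. Write $F(z)=\left(I_{\beta}f\right)(z)$, so that $F'(z)=\left(\frac{f(z)}{z}\right)^{\beta}$; logarithmic differentiation yields the key identity
$$1+\frac{zF''(z)}{F'(z)}=\beta\,\frac{zf'(z)}{f(z)}+(1-\beta),$$
which here plays the role that $\frac{\left(z^{c}f(z)\right)'}{\left(\int_{0}^{z}\zeta^{c-1}f(\zeta)\,d\zeta\right)'}=\frac{zf'(z)}{f(z)}+c$ plays for ${\mathcal{B}}_{c}$. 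Since $f\in{\mathcal{G}}$ we have $\frac{zf'(z)}{f(z)}\prec\Psi(z)$, so $1+\frac{zF''(z)}{F'(z)}\prec\Phi(z)$, where $\Phi(z)\equiv\beta\Psi(z)+(1-\beta)$. The case $\beta=0$ is trivial ($I_{0}f(z)=z\in{\mathcal{G}}$), so assume $\beta\ne 0$; then $\Phi$ is a nondegenerate complex-affine image of the convex univalent $\Psi$, hence itself convex univalent, with $\Phi(0)=1$ and $\Re e\,\Phi(z)>0$ on $\mD$ by hypothesis.

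\nni Next I would apply Lemma 2.1 twice, exactly as in the proof of Theorem 5.1. For the first pass, set $g(z)=zF'(z)=z^{1-\beta}f(z)^{\beta}$ and $k(z)=F(z)$, so $zk'(z)=g(z)$. The identity above gives $\frac{zg'(z)}{g(z)}=1+\frac{zF''(z)}{F'(z)}\prec\Phi(z)$, which has positive real part, so $g$ maps $\mD$ onto a (finitely-sheeted) starlike region; moreover $\frac{k'(z)}{g'(z)}=\left(\frac{zg'(z)}{g(z)}\right)^{-1}$ has positive real part and value $1$ at the origin, so $\frac{k'(z)}{g'(z)}\prec\frac{1+z}{1-z}$. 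Lemma 2.1 then gives $\Re e\left\{\frac{F(z)}{zF'(z)}\right\}>0$, hence $\Re e\left\{\frac{zF'(z)}{F(z)}\right\}>0$, so $F$ is starlike and maps $\mD$ onto a finitely-sheeted starlike region --- which is what the second application of Lemma 2.1 requires. For the second pass, swap the roles: take $g(z)=F(z)$ and $k(z)=zF'(z)$, so $k(0)=g(0)=0$, $\frac{k'(0)}{g'(0)}=1=\Phi(0)$, and $\frac{k'(z)}{g'(z)}=1+\frac{zF''(z)}{F'(z)}\prec\Phi(z)$. Since $\Phi$ is convex univalent, Lemma 2.1 gives $\frac{zF'(z)}{F(z)}=\frac{k(z)}{g(z)}\prec\Phi(z)=\beta\Psi(z)+(1-\beta)$.

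\nni The step I expect to be the real obstacle is the last one: passing from $\frac{zF'(z)}{F(z)}\prec\beta\Psi(z)+(1-\beta)$ to $\frac{zF'(z)}{F(z)}\prec\Psi(z)$, i.e. to $F\in{\mathcal{G}}$. For ${\mathcal{B}}_{c}$ (Theorem 1.2) and for $T$ (Theorem 5.1) the auxiliary affine transformation sits on both sides of the subordination and cancels; for $I_{\beta}$ it appears only on the right. To close the argument one must verify that $\beta\Psi(z)+(1-\beta)\prec\Psi(z)$, i.e. that the affine map $w\mapsto 1+\beta(w-1)$, which fixes the point $1=\Psi(0)$, carries the convex region $\Psi(\mD)$ into itself. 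This is immediate from $\Re e\{\beta\Psi(z)+(1-\beta)\}>0$ when $\Psi(\mD)$ is a half-plane --- in particular for $\Psi(z)=\frac{1+z}{1-z}$, which recovers $I_{\beta}(S^{*})\subset S^{*}$, the Kim--Merkes-type statement the theorem is aimed at --- and it also holds when $\beta\in[0,1]$, by convexity of $\Psi(\mD)$ and $1\in\Psi(\mD)$; in full generality it appears to need either a sharper use of the convexity of $\Psi$ or a mild strengthening of the hypothesis relating $\Psi$ and $\beta$. (For the statement to be literally about ${\mathfrak{A}}(p)$ one should take $p=1$, since for $p\ge 2$ the integrand $\left(f(\zeta)/\zeta\right)^{\beta}$ of $I_{\beta}$ is not single-valued near the origin for general $\beta\in\mC$, so the natural domain of $I_{\beta}$ is ${\mathcal{H}}_{0}(\mD)$.)
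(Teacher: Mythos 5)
Your argument is exactly the one the paper intends: Theorem 5.3 is stated with no written proof beyond the remark that it is ``readily shown'' by the method of Theorem 5.1, and your identity $1+zF''(z)/F'(z)=\beta\,zf'(z)/f(z)+(1-\beta)$ together with the two passes through Lemma 2.1 (first with $g=zF'$, $k=F$, $\Phi=(1+z)/(1-z)$ to get $F$ starlike; then with $g=F$, $k=zF'$, $\Phi=\beta\Psi+(1-\beta)$) is the correct transcription of that method. Everything up to and including the subordination $zF'/F\prec\beta\Psi+(1-\beta)$ is sound for $p=1$, and your caveat about the branch of $(f(\zeta)/\zeta)^{\beta}$ when $p\ge 2$ is also well taken.

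The obstacle you isolate in the last step is not a defect of your write-up but of the theorem as stated: the hypotheses do not imply $\beta\Psi(\mD)+(1-\beta)\subset\Psi(\mD)$, and without that containment the conclusion fails. Concretely, take $\Psi(z)=1+\tfrac12 z$, $\beta=2$ and $f(z)=ze^{z/2}$, so that $zf'(z)/f(z)=\Psi(z)$ and $\Re e\{\beta\Psi(z)+(1-\beta)\}=\Re e\{1+z\}>0$ on $\mD$; then $F(z)=(I_2f)(z)=\int_0^z e^t\,dt=e^z-1$ and $zF'(z)/F(z)=ze^z/(e^z-1)$, which at $z=0.9$ is approximately $1.517$ and hence lies outside $\Psi(\mD)=\{w:|w-1|<\tfrac12\}$, so $F\notin{\mathcal{G}}$. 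Thus the theorem needs either the extra hypothesis $\beta\Psi(z)+(1-\beta)\prec\Psi(z)$ --- automatic when $\Psi(\mD)$ is the right half-plane, or when $\beta\in[0,1]$ by convexity, exactly as you observe --- or else the weaker conclusion $zF'/F\prec\beta\Psi+(1-\beta)$. The same issue touches Corollary 5.4: for $\Psi=(1+z)/(1-z)$ the condition $\Re e\{\beta\Psi(z)+(1-\beta)\}>0$ on all of $\mD$ forces $\beta\in[0,1]$, not merely $\Re e\,\beta<1$. In short, your proposal is the intended proof, and the gap you flag is a genuine gap in the statement rather than in your argument.
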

\medskip
\nni If, for example, we set $p=1$ and $\ds \Psi(z)=\frac{1+z}{1-z}$, this leads to new and rather curious results including:

\begin{corollary} If $f\in S^*$ and $\Re e\,  \{\beta \}<1$, then $\ds\int^z_0\left(\frac{f(\zeta)}{\zeta}\right)^{\beta} d\zeta\in S^*$.
\end{corollary}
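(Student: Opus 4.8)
The plan is to derive Corollary 5.1 as a direct specialization of Theorem 5.3. First I would observe that the hypothesis $f\in S^*$ means precisely that $\frac{zf'(z)}{f(z)}\prec\frac{1+z}{1-z}$, so with the choice $\Psi(z)=\frac{1+z}{1-z}$ and $p=1$ the class $\mathcal{G}$ in Theorem 5.3 is exactly $S^*$. The map $\Psi$ is convex univalent in $\mD$ (it carries $\mD$ onto the right half-plane, which is convex) and $\Psi(0)=1$, so the structural hypotheses on $\Psi$ are met automatically; the only thing left to verify is the positivity condition $\Re e\{\beta\Psi(z)+(1-\beta)\}>0$ for all $z\in\mD$.

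Next I would unpack that positivity condition. Writing $w=\Psi(z)$, as $z$ ranges over $\mD$ the value $w$ ranges over the open right half-plane $\{\Re e\, w>0\}$. So the requirement becomes: $\Re e\{\beta w+1-\beta\}>0$ for every $w$ with $\Re e\, w>0$. Splitting $\beta=\sigma+i\tau$ and $w=u+iv$ with $u>0$, the real part equals $\sigma u-\tau v+1-\sigma$. Since $v$ can be any real number, boundedness of this expression forces $\tau=0$ to even have a chance — but $w$ ranges over an open set, not all of $\{u>0\}\times\mR$ simultaneously at a single point, so I should instead argue more carefully using the fact that $\beta w$ traces a rotated/scaled half-plane. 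The clean way: $\{\beta w: \Re e\, w>0\}$ is an open half-plane through the origin whose closure contains $0$, and its infimum of real parts over $\Re e\, w > 0$ is $0$ if $\Re e\,\beta>0$ (with $\beta w$ having positive real part when $\arg\beta=0$), while in general $\inf\Re e(\beta w)$ over the right half-plane is $-\infty$ unless $\beta\ge 0$ is real. Hmm — but the corollary only assumes $\Re e\,\beta<1$, so I must reconcile this. The resolution is that the subordination $\beta\Psi(z)+(1-\beta)\prec\beta\frac{1+z}{1-z}+(1-\beta)$ need not literally require positivity on the whole half-plane; rather, Theorem 5.3's proof (via Lemma 2.1) uses $\Re e\{\beta\Psi(z)+(1-\beta)\}>0$ only as stated, and for $\Psi(z)=\frac{1+z}{1-z}$ one computes directly that $\beta\frac{1+z}{1-z}+(1-\beta)=\frac{1+(1-2\beta)z\cdot(\text{something})}{1-z}$; I would just evaluate $\Re e$ of this explicit Möbius image. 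For $z\in\mD$, $\frac{1+z}{1-z}$ has positive real part, so $\Re e\{\beta\Psi(z)+1-\beta\}=\Re e(\beta)\Re e\Psi(z)-\Im m(\beta)\Im m\Psi(z)+1-\Re e(\beta)$; the cross term is the obstacle, and I would bound it by noting $\Psi(\mD)$ is the right half-plane and handle it — or, more honestly, I suspect the intended reading uses a one-parameter subordination chain so that the relevant quantity stays in a half-plane by convexity of $\Psi$ together with $\Re e\,\beta<1$ guaranteeing the endpoint $1-\beta$ and the "point at infinity" direction $\beta\cdot(+\infty)$ lie on the correct side.

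So concretely the steps are: (1) identify $\Psi(z)=\frac{1+z}{1-z}$, $p=1$, check $\Psi$ convex univalent with $\Psi(0)=1$; (2) verify the positivity hypothesis $\Re e\{\beta\Psi(z)+(1-\beta)\}>0$ on $\mD$ under the assumption $\Re e\,\beta<1$, using the explicit Möbius form of $\beta\frac{1+z}{1-z}+1-\beta$ and the geometry that its image is again a disk or half-plane whose boundary passes appropriately relative to the imaginary axis; (3) conclude $\mathcal{G}=S^*$ and invoke Theorem 5.3 to get $I_\beta(S^*)\subset S^*$, which is exactly the assertion $\int_0^z(f(\zeta)/\zeta)^\beta\,d\zeta\in S^*$.

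The main obstacle is step (2): confirming that $\Re e\,\beta<1$ is precisely the condition that makes $\Re e\{\beta\Psi(z)+1-\beta\}>0$ throughout $\mD$ when $\Psi$ is the half-plane map. Since $\Psi(\mD)$ is unbounded, the naive pointwise estimate fails, and the correct argument must use that $\beta\Psi(z)+1-\beta$ is itself a Möbius (or affine-composed-with-Möbius) image of $\mD$ — hence its image is a disk, half-plane, or exterior of a disk — so positivity of its real part is equivalent to a single boundary computation: evaluating the image of the boundary circle (the limit values as $|z|\to 1$) and checking the image region lies in the right half-plane. I expect this reduces, after computing the image of $z=-1$ (giving the value $1$, independent of $\beta$, which sits on the positive real axis) and tracking the image of $z=1$ (the "point at infinity" direction $\beta\cdot\infty$), to exactly the inequality $\Re e\,\beta<1$ governing on which side the relevant half-plane lies. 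Once that geometric bookkeeping is done, everything else is an immediate appeal to Theorem 5.3.
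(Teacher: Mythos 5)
Your overall strategy --- specialize Theorem 5.3 to $p=1$ and $\Psi(z)=\frac{1+z}{1-z}$ --- is exactly what the paper does (the corollary is stated there with no further argument), so the route matches. The trouble is your step (2), which you correctly identify as the crux and never actually carry out: the implication ``$\Re e\,\beta<1$ implies $\Re e\{\beta\Psi(z)+(1-\beta)\}>0$ on $\mD$'' is false, and no amount of M\"obius bookkeeping will rescue it. Indeed
$$\beta\,\frac{1+z}{1-z}+(1-\beta)=\frac{1+(2\beta-1)z}{1-z},$$
and writing $A=2\beta-1$, this M\"obius map sends the unit circle to the line $\bigl\{\tfrac{1-A}{2}+t\,i(1+A):\,t\in\mR\bigr\}$ and sends $\mD$ onto one of the two half-planes it bounds. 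That half-plane lies in $\{\Re e\,w>0\}$ if and only if the boundary line is vertical and sits in the closed right half-plane, i.e.\ if and only if $A$ is real with $-1< A\le 1$ (plus the degenerate case $A=-1$); equivalently, $\beta\in[0,1]$ is real. Your own computation with $\sigma u-\tau v$ already told you this --- the cross term $-\tau v$ is unbounded below on the right half-plane unless $\tau=0$ --- and you should have trusted it rather than hoping a ``subordination chain'' would intervene. Already $\beta=i$ satisfies $\Re e\,\beta<1$ but violates the hypothesis of Theorem 5.3.

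Moreover the gap is not merely in the derivation: the corollary as stated is false for general $\beta$ with $\Re e\,\beta<1$. Take $f(z)=z/(1-z)^2\in S^*$ and $\beta=-10$. Then $\bigl(f(\zeta)/\zeta\bigr)^{\beta}=(1-\zeta)^{20}$ and
$$\int_0^z(1-\zeta)^{20}\,d\zeta=\frac{1-(1-z)^{21}}{21},$$
which vanishes at $z_1=1-e^{2\pi i/21}$ with $|z_1|=2\sin(\pi/21)\approx 0.30<1$; a normalized univalent starlike function cannot vanish off the origin, so $I_{-10}(f)\notin S^*$. What Theorem 5.3 actually yields with $\Psi(z)=\frac{1+z}{1-z}$ is $I_\beta(S^*)\subset S^*$ for real $\beta\in[0,1]$, consistent with the endpoints $I_0f=z$ and $I_1f={\mathcal{A}}(f)$. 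In short: your plan is the paper's plan, but the verification you could not complete is genuinely unverifiable, and the statement itself needs its hypothesis on $\beta$ repaired before any proof can go through.
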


\medskip

\nni Even for transforms like $\ds P_{\lambda}(f)(z)=\int^z_0 f'(\zeta)^{\lambda}\,d\zeta$, introduced by Pfaltzgraff [18], we can prove that if $f\in {\mathcal{H}}_0(\mD)$ and 
$$\Re e\,\left\{1+\frac{\lambda zf''(z)}{f'(z)}\right\}>0$$
 for all $z\in \mD$, then $\ds P_{\lambda}(f)\in S^*$.

\bibliographystyle{unsrt}

\end{document}